\newtheorem{theorem}{Theorem}
\newtheorem{prop}{Proposition}
\newtheorem{cor}{Corollary}
\newtheorem{lemma}{Lemma}
\newtheorem{refthm}{Theorem}
\newcommand{\D}{\mathbb{D}}
\newcommand{\C}{\mathbb{C}}
\newcommand{\conj}[1]{\overline{#1}}
\newcommand{\close}[1]{\overline{#1}}
\DeclareMathOperator{\sgn}{sgn}
\DeclareMathOperator{\Rp}{Re}
\newcommand{\wea}{\omega}  %
\newcommand{\web}{\nu}
\begin{document}

\title[Extremal Function Regularity in Weighted Bergman and Fock Spaces]
{Regularity of Extremal Functions in Weighted Bergman and 
Fock Type Spaces}

\subjclass[2010]{Primary 30H20. Secondary 46E15.}
\keywords{extremal problem, regularity, Fock space, Bergman space, density of polynomials}

\author{Timothy Ferguson}

\date{October 30, 2014}

\begin{abstract}
We discuss the regularity of 
extremal functions in certain weighted Bergman and Fock type spaces. 
Given an appropriate analytic function $k$, the 
corresponding extremal function is the function with unit norm 
maximizing $\Rp \int_\Omega f(z) \conj{k(z)}\, \nu(z) \, dA(z)$
over all functions $f$ of unit norm, 
where $\nu$ is the weight function and 
 $\Omega$ is the domain of the functions in the space. 
We consider the case where $\nu(z)$ is a decreasing radial function 
satisfying some additional assumptions, and where 
$\Omega$ is either a disc centered at the origin or the entire 
complex plane. 
We show that if $k$ grows slowly in a certain sense, then 
$f$ must grow slowly in a related sense.  
We also discuss a relation between the integrability and growth 
of certain log-convex functions, and apply the result to 
obtain information about the growth of integral means 
of extremal functions in Fock type spaces.
\end{abstract}

\maketitle

This article deals with the regularity of solutions to 
extremal problems in certain weighted Bergman spaces 
in discs, as well as in Fock spaces.  Our results also apply
to other spaces of entire functions that are similar to 
Fock spaces but that are defined using a measure other than 
$e^{-\alpha |z|^2} \, dA$.  (For information on Bergman spaces, see the 
books \cite{D_Ap} or \cite{Zhu_Ap}.  For information on Fock 
spaces, see for example \cite{Zhu_Fock}.)

For Hardy spaces, which have many similarities with Bergman
spaces but are often simpler to study, 
extremal problems have been extensively investigated 
(see \cite{D_Hp} for references). 
Extremal problems in 
Bergman spaces are an area of active research.  
For example, see \cite{Dragan}, \cite{Khavinson_nonvanishing}, 
\cite{Dragan_point_eval}, 
and \cite{MacGregor_Stessin}.
One important application of extremal problems in Bergman spaces is 
to the study of canonical divisors, 
which appear as solutions to certain extremal problems 
and play a role in Bergman spaces similar to Blaschke products in 
Hardy spaces
(see \cite {Hedenmalm_canonical_A2}, \cite{Hansbo}, \cite{DKS}, 
\cite{DKSS_Pac}, and \cite{DKSS_Mich}). 
Extremal functions in Fock spaces 
have been studied in \cite{Benetau_Fock_Extremal}. 

Several results about regularity of solutions to extremal problems in 
Bergman spaces are known, although there are many open questions.  
In \cite{Ryabykh}, Ryabykh obtained an important result on the subject 
(see \cite{tjf1} for a simplified proof).  The articles 
\cite{tjf2}, \cite{Khavinson_McCarthy_Shapiro}, 
\cite{Sundberg}, and
\cite{Khavinson_Stessin} also deal with regularity of solutions 
to extremal problems in Bergman spaces.

We now discuss the subject of this paper in more detail. 
Let $0 < R \le \infty$ and let $\D_R$ be the open disc of radius $R$ centered 
at the origin (if $R=\infty$, then $\D_R$ is the entire complex plane). 
Let $\nu$ be a non-negative measurable function on $\D_R$ 
that is different from zero on a set of positive measure 
and let $A^p_R(\nu)$ be the space 
of all functions analytic in $\D_R$ such that 
\[
\|f\| = \left\{ \int_{\D_R} |f(z)|^p \nu(z) \, dA(z) \right\}^{1/p} < \infty.
\]
Throughout the paper we make the assumption that $1<p<\infty$ unless
otherwise noted. 
For certain functions $\nu$, the space $A^p_R(\nu)$ is a Banach
space with norm $\|\cdot\|$.  When $R<\infty$, the space is known 
as a weighted Bergman space, whereas when $R=\infty$, the space 
will be called a Fock type space. The standard Fock spaces correspond to the 
case where $R = \infty$ and $\nu = e^{-\alpha |z|^2}$, where 
$\alpha > 0$. 

Let $\nu$ be a function such that $A^p_R(\nu)$ is a Banach space, 
and suppose that $k \in A^{p'}_R(\nu)$, where $1/p + 1/p' = 1$. Then 
\[
f \mapsto \int_{\D_R} f(z) \conj{k(z)} \, \nu(z)\, dA(z)
\]
defines a linear functional $\Phi_k$ on $A^p_R(\nu)$, 
with norm at most $\|k\|_{A^{p'}_R(\nu)}$.  
We let $\|k\|^*$ denote the norm of $\Phi_k$. Thus, 
$\|k\|^* \le \|k\|_{A^{p'}_R(\nu)}$ for all functions $k \in A^{p'}_R(\nu)$.

We seek a function $f \in A_R^p(\nu)$ such that 
\begin{equation}\label{eq:extprob}
\|f\|_{A_R^p(\nu)} = 1 \quad \text{and} \quad
  \Rp \Phi_k(f) = \sup_{\|g\|_{A_R^p(\nu)} = 1} \Rp \int_{\D_R} f(z) \conj{k(z)} 
\, \nu(z) \, dA(z) .
 \end{equation}
We say that $k$ is the integral kernel for the extremal problem, and that 
$f$ is the corresponding extremal function. 
Because the space 
$L^p(\nu)$ is uniformly convex, there always exists a unique 
solution to this extremal problem (see \cite{tjf1}, Theorem 1.4).
In the case where $\nu=1$ and $R < \infty$, it is know that 
if $k$ has some suitable additional regularity beyond being 
in the space $A^{p'}_R(\nu)$, then $f$ will also have some additional 
regularity.  For example, see 
\cite{Ryabykh}, \cite{tjf1}, \cite{tjf2}, and \cite{Khavinson_Stessin}.
In what follows, we generalize the results of Ryabykh in \cite{Ryabykh} 
to certain non-constant measures $\nu$, and we obtain results for both the 
cases $R<\infty$ and $R=\infty$.

The outline of this article is as follows.  In Section \ref{sec:prelim}, 
we discuss some preliminary results.  In Section \ref{sec:regdisc}, we 
discus regularity for extremal functions in weighted Bergman spaces, and 
in Section \ref{sec:regplane}, we discuss regularity for 
weighted Fock type 
spaces.  In Section \ref{sec:logconvex}, we give results which throw 
further light on some of the quantities appearing in the statement of 
the main theorem of Section \ref{sec:regplane}. 
To do this, we find a relation between the integrability and growth 
of certain log-convex functions, and apply the result to 
obtain information about the growth of integral means 
of extremal functions in Fock type spaces.
In Section \ref{sec:polydense}, 
we discuss the density of polynomials in various weighted 
Bergman and Fock type spaces and present various auxiliary results 
which are needed for the main results of the paper. 

\section{Some Preliminary Results}\label{sec:prelim}

Let $0 < R \le \infty$ and 
let $\D_R$ denote the open disc centered at the origin with radius $R$
(where $\D_\infty = \C$).  Let $dA$ represent area measure. 

For $\nu(z)$ a non-negative measurable function defined on 
$\D_R$ that is not identically zero (in the almost everywhere sense), 
we let $A^p_R(\nu)$ be the space of all
functions analytic in $\D_R$ that are also in $L^p(\nu \, dA)$.  We take the 
norm of $A^p_R(\nu)$ to be the same as the norm of $L^p(\nu \, dA)$. 
Note that while $A^p_R(\nu)$ is a subspace of $L^p(\nu \, dA)$, it is not 
necessarily a closed subspace. However, for all the measures we deal with, 
$A^p_R(\nu)$ will be a closed subspace of $L^p(\nu \, dA)$. 

Many of our results focus on the case where $\nu(z) = \wea(|z|^2)$, where
$\wea$ is a positive, decreasing, and non-constant  
function on $[0,R^2)$ 
that is analytic in some complex neighborhood of $[0,R^2)$. 
The space $A^p_R(\wea(|z|^2))$ has norm defined by  
\[
\|f\|_{A^p_R(\wea(|z|^2))} = 
      \left( \int_{\D_R} |f(z)|^p \wea(|z|^2)\, dA(z) \right)^{1/p}, 
\]
where $dA$ represents area measure. 
We note that the space in question is indeed a Banach space, by 
Proposition \ref{prop:apbanach}.

Next, we recall the Cauchy-Green theorem, which we state for 
convenience since we will be using it several times.  
\begin{refthm}
Let $\Omega$ be a 
$C^1$ domain in $\mathbb{C}$ and let $f \in C^1(\close{\Omega}).$ 
Then 
\[
\begin{split}
\frac{1}{2i} \int_{\partial \Omega} f(z)\, dz &= 
\int_{\Omega} \frac{\partial}{\partial \overline{z}} f(z) \, dA 
\text{\qquad and} \\
\frac{i}{2} \int_{\partial \Omega} f(z)\, d\conj{z} &= 
\int_{\Omega} \frac{\partial}{\partial z} f(z) \, dA. \\
\end{split}
\]
\end{refthm}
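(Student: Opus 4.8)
The plan is to deduce both identities from the classical Green's theorem in the plane, $\int_{\partial\Omega}(P\,dx + Q\,dy) = \int_\Omega(\partial Q/\partial x - \partial P/\partial y)\,dA$, valid for real-valued $P, Q \in C^1(\close{\Omega})$ on a $C^1$ domain $\Omega$ with positively oriented boundary. Since $f$ is complex-valued, I would first observe that this identity extends verbatim to complex-valued $P$ and $Q$, simply by applying it to $\Rp f$ and $\Ip f$ separately and recombining. The hypothesis $f \in C^1(\close{\Omega})$ enters precisely here, guaranteeing that all the boundary and area integrals are well defined.

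For the first identity, write $z = x + iy$ so that $dz = dx + i\,dy$, giving
\[
\int_{\partial\Omega} f\, dz = \int_{\partial\Omega}\bigl(f\, dx + (if)\, dy\bigr).
\]
Applying Green's theorem with $P = f$ and $Q = if$ turns the right-hand side into $\int_\Omega\bigl(i\,\partial f/\partial x - \partial f/\partial y\bigr)\,dA$, and a one-line computation using the Wirtinger operator $\partial/\partial\conj{z} = \tfrac{1}{2}(\partial/\partial x + i\,\partial/\partial y)$ shows this equals $2i\int_\Omega \partial f/\partial\conj{z}\,dA$. Dividing by $2i$ yields the first formula. The second identity is entirely analogous: one uses $d\conj{z} = dx - i\,dy$, takes $P = f$ and $Q = -if$, and compares the resulting integrand with $\partial/\partial z = \tfrac{1}{2}(\partial/\partial x - i\,\partial/\partial y)$ to obtain $-2i\int_\Omega \partial f/\partial z\,dA$, which gives the stated formula after multiplying by $i/2$.

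I do not anticipate any genuine obstacle, as the entire content is Green's theorem together with the algebra of the Wirtinger derivatives; the only points demanding care are the bookkeeping of the factors of $i$ and the tacit assumption that $\partial\Omega$ carries the standard positive orientation. If one preferred not to invoke Green's theorem for complex integrands as a given, the cleanest route would be to establish the real-variable theorem first on rectangles, where it reduces to the fundamental theorem of calculus, and then pass to general $C^1$ domains by an exhaustion or partition-of-unity argument; for the purposes of this paper, however, it suffices to cite the real Green's theorem and carry out the two short computations indicated above.
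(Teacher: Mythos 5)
Your derivation is correct: both sign computations check out ($2i\,\partial_{\conj z}f = if_x - f_y$ and $-2i\,\partial_z f = -if_x - f_y$ match the Green's-theorem integrands exactly), and the reduction of the complex-valued case to real and imaginary parts is the right way to justify applying Green's theorem. The paper itself states this as a classical reference theorem and supplies no proof, so there is nothing to compare against; your argument is the standard one and would serve as a complete proof, with the one caveat you already flagged, namely that the positive orientation of $\partial\Omega$ must be taken as part of the statement.
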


We will also need the following theorem, which gives a characterization 
of extremal functions. It can be found in \cite{Shapiro_Approx}, p.~55.
\begin{refthm}\label{thm:exint}
Let $\sigma$ be a measure, 
let $1 < p < \infty$, let  $X$ be a closed 
subspace of $L^p(\sigma)$, and let $\phi \in X^*$, 
the dual space of $X$. 
Assume that $\phi$ is not identically $0$. 
A function $F \in X$ with $\|F\| = 1$ satisfies 
$$\Rp \phi(F) = \sup_{g \in X, \|g\|=1} \Rp \phi(g) = \| \phi\|_{X^*}$$
if and only if $\phi(F) > 0$ and 
$$\int h |F|^{p-1} \conj{\sgn F}  \, d\sigma = 0$$ for all 
$h \in X$ with $\phi(h) = 0.$  
If $F$ satisfies the above conditions, then 
$$\int h |F|^{p-1} \conj{\sgn F}\,  d\sigma 
= \frac{\phi(h)}{\| \phi \|_{X^*}}$$
for all $h\in X.$
\end{refthm}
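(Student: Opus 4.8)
The plan is to reduce everything to the single auxiliary element $g_0 := |F|^{p-1}\conj{\sgn F}$, which lies in $L^{p'}(\sigma)$ and hence induces a bounded linear functional $\psi(h) := \int h\, |F|^{p-1}\conj{\sgn F}\, d\sigma$ on $X$. Since $\|F\|=1$ and $(p-1)p'=p$, one has $\||F|^{p-1}\|_{p'}^{p'} = \int |F|^p\, d\sigma = 1$, so $\|g_0\|_{p'}=1$ and Hölder's inequality gives $|\psi(g)| \le \|g\|$ for every $g\in X$. Two elementary computations then drive the whole argument: first, because $F\,\conj{\sgn F} = |F|$, we get $\psi(F) = \int |F|^p\, d\sigma = 1$; second, differentiating under the integral sign yields the variational identity $\frac{d}{dt}\int |F+th|^p\, d\sigma\big|_{t=0} = p\,\Rp\psi(h)$ for $h\in X$ and real $t$. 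I would also record the automatic fact that $\sup_{\|g\|=1}\Rp\phi(g) = \|\phi\|_{X^*}$ (rotate $g$ by a unimodular constant), so the two displayed suprema in the statement always agree.

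For the forward implication, suppose $F$ is extremal. Then $\Rp\phi(F) = \|\phi\|_{X^*}$, and combining this with $|\phi(F)| \le \|\phi\|_{X^*}\|F\| = \|\phi\|_{X^*}$ forces $\phi(F) = \|\phi\|_{X^*}$, which is real and strictly positive since $\phi\not\equiv 0$. To get the orthogonality relation, fix $h\in X$ with $\phi(h)=0$ and study the real-variable function $t\mapsto \|F+th\|$. Because $\phi(F+th)=\phi(F)$ is constant and positive, normalizing $F+th$ and comparing against the extremal $F$ gives $\Rp\phi(F)/\|F+th\| \le \Rp\phi(F)$, whence $\|F+th\| \ge 1 = \|F\|$ for all small $t$. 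Thus $t=0$ minimizes this function, its derivative vanishes there, and the variational identity gives $\Rp\psi(h)=0$. Replacing $h$ by $ih$ (still in $\ker\phi$) gives $\Ip\psi(h)=0$, so $\psi(h)=0$, which is exactly $\int h|F|^{p-1}\conj{\sgn F}\, d\sigma = 0$.

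For the converse, assume $\|F\|=1$, $\phi(F)>0$, and that $\psi$ vanishes on $\ker\phi$. Since $\phi\not\equiv0$, the inclusion $\ker\phi\subseteq\ker\psi$ forces $\psi = c\,\phi$ for some scalar $c$; evaluating at $F$ gives $1=\psi(F)=c\,\phi(F)$, so $c = 1/\phi(F)>0$ is real. Then for any unit $g\in X$, Hölder gives $\Rp\phi(g) = \tfrac{1}{c}\Rp\psi(g) \le \tfrac{1}{c}|\psi(g)| \le \tfrac1c = \phi(F) = \Rp\phi(F)$, so $F$ is extremal. Once extremality is known we have $\phi(F)=\|\phi\|_{X^*}$, and the relation $\psi = c\,\phi$ with $c = 1/\|\phi\|_{X^*}$ reads precisely $\int h|F|^{p-1}\conj{\sgn F}\, d\sigma = \phi(h)/\|\phi\|_{X^*}$ for all $h\in X$, which is the final assertion.

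The main obstacle will be justifying the differentiation under the integral sign in the forward direction. The $t$-derivative of the integrand is $p\,|F+th|^{p-2}\,\Rp\!\big(h\,\conj{(F+th)}\big)$, which for $|t|\le 1$ is dominated by $p\,|h|\,(|F|+|h|)^{p-1}$; this dominating function is integrable by Hölder since $F,h\in L^p(\sigma)$, so the dominated convergence theorem applies. One must also dispose of the measure-zero ambiguity in $\sgn F$ on the set $\{F=0\}$, where the relevant integrand vanishes for $p>1$. With the functional $\psi$ and the two computations of the first paragraph in hand, everything else is routine.
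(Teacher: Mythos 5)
The paper does not prove this statement; it is quoted as a reference theorem with a citation to Shapiro's \emph{Topics in Approximation Theory}, p.~55, so there is no in-paper proof to compare against. Your argument is correct and is essentially the standard variational proof given there: the functional $\psi(h)=\int h\,|F|^{p-1}\conj{\sgn F}\,d\sigma$ with $\|\psi\|\le 1$ and $\psi(F)=1$, the G\^ateaux derivative $\tfrac{d}{dt}\|F+th\|_p^p\big|_{t=0}=p\,\Rp\psi(h)$ justified by the mean-value-theorem domination $p\,|h|(|F|+|h|)^{p-1}$, and the codimension-one argument $\ker\phi\subseteq\ker\psi\Rightarrow\psi=c\,\phi$ are all in order, and they yield both directions together with the final identity with $c=1/\|\phi\|_{X^*}$.
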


Lastly, we note that 
$L^p$ spaces are uniformly convex for $1<p<\infty$
(see \cite{Clarkson} for a definition of uniform convexity and a proof of 
this result), and thus 
the $A^p_R$ spaces under consideration are uniformly convex,
since any closed subspace of a uniformly convex space is uniformly convex.
Using Theorem 3.1 in \cite{tjf1} and the fact that 
$\|k\|^* \le \|k\|_{A^{p'}_R(\nu)}$ for all functions $k\in A^{p'}_R(\nu)$, 
we have the 
following theorem. 
\begin{refthm}\label{thm:extkernelcont}
Suppose that $A^p_R(\nu)$ is a Banach space and that $k$ is a 
non-zero function in $A^{p'}_R(\nu)$. 
Then there is a unique solution to the extremal problem 
\eqref{eq:extprob} with integral kernel $k$.  Let $k_n$ be 
a sequence of functions approaching $k$ in the 
$A^{p'}_R(\nu)$ norm, and let $f_n$ be the extremal 
functions corresponding to $k_n$, and let $f$ be the extremal function 
corresponding to $k$.  Then $f_n \rightarrow f$ in the $A^p_R(\nu)$ norm. 
\end{refthm}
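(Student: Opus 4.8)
The plan is to obtain the continuous dependence as a consequence of the uniform convexity of $A^p_R(\nu)$ together with the general machinery of \cite{tjf1}. Existence and uniqueness of the extremal function for a fixed non-zero kernel have already been recorded via uniform convexity (Theorem 1.4 of \cite{tjf1}), so the substance of the statement is the convergence $f_n \to f$. The key structural observation is that the extremal function depends only on the induced linear functional $\Phi_k \in (A^p_R(\nu))^*$; hence it suffices to show that $\Phi_{k_n} \to \Phi_k$ in the dual norm and then invoke a continuity result for the solution map $\phi \mapsto F_\phi$, which is exactly what Theorem 3.1 of \cite{tjf1} provides.

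First I would verify the dual-norm convergence. The assignment $k \mapsto \Phi_k$ is linear, so $\Phi_{k_n} - \Phi_k = \Phi_{k_n - k}$, and by definition the operator norm of this functional is $\|k_n - k\|^*$. Using the stated inequality $\|k_n - k\|^* \le \|k_n - k\|_{A^{p'}_R(\nu)}$ and the hypothesis $k_n \to k$ in the $A^{p'}_R(\nu)$ norm, I conclude that $\|\Phi_{k_n} - \Phi_k\| \to 0$; in particular $\Phi_{k_n} \to \Phi_k$ in $(A^p_R(\nu))^*$, and since $k \neq 0$ the limiting functional $\Phi_k$ is non-zero. Theorem 3.1 of \cite{tjf1} then asserts precisely that the extremal function depends norm-continuously on the functional, yielding $f_n \to f$ in $A^p_R(\nu)$.

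Should a self-contained argument for the continuity be preferred, I would work directly from uniform convexity. From $\Rp \Phi_k(f_n) = \Rp \Phi_{k_n}(f_n) + \Rp(\Phi_k - \Phi_{k_n})(f_n) \ge \|\Phi_{k_n}\| - \|\Phi_k - \Phi_{k_n}\|$, together with the upper bound $\Rp \Phi_k(f_n) \le \|\Phi_k\|$ (valid since $\|f_n\| = 1$) and the convergence $\|\Phi_{k_n}\| \to \|\Phi_k\|$ established above, one sees that $\Rp \Phi_k(f_n) \to \|\Phi_k\|$; that is, $(f_n)$ is a maximizing sequence for $\Phi_k$ over the unit ball. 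A standard uniform-convexity argument then shows such a sequence is Cauchy: applying $\Phi_k$ to the averages $(f_n + f_m)/2$ forces $\|(f_n+f_m)/2\| \to 1$ while $\|f_n\|,\|f_m\| \le 1$, so uniform convexity gives $\|f_n - f_m\| \to 0$, and the limit must be the unique extremal function $f$.

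The main obstacle is the continuity of the solution map, and I expect it to be the only nontrivial point; once the dual-norm convergence $\Phi_{k_n} \to \Phi_k$ is in hand, the conclusion is immediate. Unwinding Theorem 3.1 of \cite{tjf1}, the crux is exactly the uniform-convexity estimate guaranteeing that maximizing sequences for a fixed functional are Cauchy, which is where the uniform convexity of $A^p_R(\nu)$, inherited as a closed subspace of the uniformly convex space $L^p(\nu\,dA)$, is used.
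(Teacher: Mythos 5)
Your proposal is correct and follows essentially the same route as the paper, which deduces the theorem directly from Theorem 3.1 of \cite{tjf1} together with the bound $\|k_n-k\|^* \le \|k_n-k\|_{A^{p'}_R(\nu)}$ to get dual-norm convergence of the functionals. Your additional self-contained sketch via uniform convexity (maximizing sequences are Cauchy) is a sound unwinding of that cited result but is not needed beyond what the paper records.
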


By Theorem 4.1 in \cite{tjf1}, we have the following result. 
When we apply it, we will let $X_n$ be the space of polynomials 
of degree at most $n$.  
\begin{refthm}\label{thm:extspacecont}
Suppose that $A^p_R(\nu)$ is a Banach space, 
that $f$ is the solution to the extremal problem 
\eqref{eq:extprob} with integral kernel $k$, and that 
$X_1 \subset X_2 \subset \cdots $ are closed subspaces of 
$A^p_R(\nu)$ such that 
$\close{ \cup_{n=1}^\infty X_n} = A^p_R(\nu)$. Let 
$f_n$ be the solution to the extremal problem 
\eqref{eq:extprob} posed over the space $X_n$ instead of the 
space $A^p_R(\nu)$.  Then $f_n$ exists and is unique, and 
$f_n \rightarrow f$ in the $A^p_R(\nu)$ norm as $n \rightarrow \infty$. 
Also, $\| \Phi_{k}|_{X_n} \| \rightarrow \|\Phi_k\|$ as $n \rightarrow \infty$.
\end{refthm}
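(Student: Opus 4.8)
The plan is to argue entirely at the level of the abstract uniformly convex space, since all the structure needed has already been assembled. First I would observe that each $X_n$, being a closed subspace of $A^p_R(\nu)$, is a closed subspace of the uniformly convex space $L^p(\nu)$ and hence is itself uniformly convex. Consequently the existence and uniqueness result for the extremal problem (Theorem 1.4 of \cite{tjf1}) applies verbatim to the functional $\Phi_k|_{X_n}$ acting on $X_n$, yielding a unique extremal function $f_n$, at least once $n$ is large enough that $\Phi_k|_{X_n} \ne 0$. I would note that $\Phi_k|_{X_n} \ne 0$ for all large $n$ as a byproduct of the norm convergence established in the next step, so the degenerate small-$n$ case has no effect on the limit.

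The second step is the convergence of the restriction norms. Because $X_n \subset X_{n+1} \subset A^p_R(\nu)$, the numbers $\|\Phi_k|_{X_n}\|$ form a nondecreasing sequence bounded above by $\|\Phi_k\|$. For the reverse inequality I would use density: given $\varepsilon > 0$, pick a unit vector $g \in A^p_R(\nu)$ with $\Rp \Phi_k(g) > \|\Phi_k\| - \varepsilon$, then use $\close{\cup_n X_n} = A^p_R(\nu)$ to find $g' \in X_n$, for some $n$, close to $g$ in norm. Normalizing $g'$ and using the continuity of $\Phi_k$ shows $\|\Phi_k|_{X_n}\| > \|\Phi_k\| - 2\varepsilon$. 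Hence $\|\Phi_k|_{X_n}\| \to \|\Phi_k\|$, which in particular forces $\Phi_k|_{X_n} \ne 0$ for large $n$.

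The final and most substantive step is the norm convergence $f_n \to f$, where uniform convexity does the real work. By the characterization in Theorem \ref{thm:exint}, both $\Phi_k(f) = \|\Phi_k\|$ and $\Phi_k(f_n) = \|\Phi_k|_{X_n}\|$ are real and positive. I would then examine the midpoint $(f_n + f)/2$: on one hand its norm is at most $1$, and on the other hand
\[
\Big\| \tfrac{f_n + f}{2} \Big\| \ge \frac{\Rp \Phi_k\big( \tfrac{f_n+f}{2} \big)}{\|\Phi_k\|} = \frac{\|\Phi_k|_{X_n}\| + \|\Phi_k\|}{2\|\Phi_k\|} \longrightarrow 1 .
\]
Thus $\|(f_n+f)/2\| \to 1$ while $\|f_n\| = \|f\| = 1$, and uniform convexity of $A^p_R(\nu)$ yields $\|f_n - f\| \to 0$.

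The main obstacle, such as it is, lies less in any single estimate than in assembling the pieces in the right order: the uniform-convexity argument for $f_n \to f$ depends on already knowing $\|\Phi_k|_{X_n}\| \to \|\Phi_k\|$, and both depend on the extremal characterization guaranteeing that the functional values $\Phi_k(f_n)$ and $\Phi_k(f)$ are genuinely real and positive so that the midpoint estimate is valid. The one delicate point is the bookkeeping around normalization in the density step together with the exclusion of the degenerate case $\Phi_k|_{X_n} = 0$; once those are handled, the convergence follows cleanly from the geometry of the space.
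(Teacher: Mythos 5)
Your argument is correct, and it is essentially the standard proof: the paper itself gives no proof of this statement but simply cites Theorem~4.1 of \cite{tjf1}, whose argument is exactly your combination of monotonicity-plus-density for $\|\Phi_k|_{X_n}\| \to \|\Phi_k\|$ and the midpoint/uniform-convexity estimate for $f_n \to f$. Your handling of the degenerate case $\Phi_k|_{X_n}=0$ for small $n$ is a reasonable (and necessary) footnote to the statement as given.
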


\section{Regularity of Extremal Functions in Weighted Bergman Spaces}\label{sec:regdisc}

Let $R < \infty$. 
We suppose that $\wea$ is analytic in a neighborhood 
of $[0,R^2)$, and that $\wea$ is positive and decreasing
on $[0,R^2)$. 
This implies that $\wea$ has a limit from the left at $R^2$, so 
we may assume without loss of generality that it is continuous from 
the left at $R^2$. 
By Proposition \ref{prop:appolydensedisc}, the polynomials are 
dense in $A^p_R(\wea(|z|^2))$. 
Now suppose $f$ is analytic in the disk $\D_R$ and 
is in $C^1(\close{\D_R})$. 
Consider the integral
\[
\frac{R^2}{2} \int_{0}^{2\pi} |f(Re^{i\theta})|^p \, \wea(R^2) \, d\theta.
\]
We let $z=Re^{i\theta}$ 
and change variables in the above integral 
by substituting $R^2 \, d\theta = i z \, d\conj{z}$. Next we 
apply the Cauchy-Green theorem to the resulting integral.  
After rearrangement, 
we see that  
\begin{equation}
\begin{split}\label{eq:base}
\frac{R^2}{2} &\int_0^{2\pi} |f(Re^{i\theta})|^p \wea(R^2)\, d\theta -
\int_{\D_R} |z|^2 |f(z)|^p w'(|z|^2) \, dA  \\ 
=&\int_{\D_R} \left(\frac{p}{2}z f'(z) + f(z)\right) |f(z)|^{p-1} 
(\sgn \conj{f(z)})  \wea(|z|^2) \, dA
\end{split}
\end{equation}
Note that the left-hand side of equation \eqref{eq:base} is 
non-negative, since the first integral in the expression is non-negative 
and the second integral in the expression is non-positive.  This 
is due to the assumption that $\wea$ is decreasing.  

Now consider the right hand side of equation \eqref{eq:base}.
Let $k$ be a fixed function analytic in ${\D_R}$ and in 
$C^1(\overline{\D})$.  
Let $f_n$ be the solution to the extremal problem of maximizing 
the real part of 
$\int_{\D_R} g(z) \overline{k}(z) \, \wea(|z|^2) \, dA(z)$ over all
polynomials $g$ of degree at most $n$ such that 
$\|g\|_{A^p_R(\wea(|z|^2))} = 1$. 
Call the maximum $\|k\|^*_n$. 
By Theorem \ref{thm:exint} applied to the space of polynomials of 
degree $n$ considered as a subspace of $A^p_R(\wea(|z|^2))$, 
we have 
\[
\begin{split}
&\int_{\D_R} \left(\frac{p}{2} zf_n'(z)+f_n(z)\right) |f_n(z)|^{p-1} 
(\sgn \conj{f_n(z)})  \wea(|z|^2) \, dA \\
= \frac{1}{\|k\|^*_n} &\int_{\D_R} \left(\frac{p}{2} zf_n'(z)+f_n(z)\right) 
\overline{k}(z) \wea(|z|^2) \, dA,
\end{split}
\]
since $zf_n'(z)$ is also a polynomial of degree $n$.

If we take equation \eqref{eq:base} with $f_n$ in place of $f$, 
and use the above equation and also use the fact that
\[
  zf_n'(z)\wea(|z|^2) =
 \partial_z [zf_n(z) \wea(|z|^2)] - zf_n(z)\wea'(|z|^2) \conj{z} 
- f_n(z)\wea(|z|^2),
\]
we see that 
\[
\begin{split}
\frac{R^2}{2} &\int_0^{2\pi} |f_n(Re^{i\theta})|^p \wea(R^2)\, d\theta -
\int_{\D_R} |z|^2 |f_n(z)|^p \wea'(|z|^2) \, dA  \\ 
= 
\frac{p}{2\|k\|^*_n} &\int_{\D_R}  
\partial_z [zf_n(z) \wea(|z|^2) \conj{k(z)} ] \, dA -
\frac{p}{2\|k\|^*_n} \int_{\D_R}   |z|^2f_n(z)\conj{k(z)} \wea'(|z|^2) \, dA \\
+  \frac{1}{\|k\|^*_n} & \left(1-\frac{p}{2}\right) \int_{\D_R} f_n(z) 
\conj{k(z)} \wea(|z|^2) \, dA
\\
= \frac{p}{2\|k\|^*_n} &\int_{\D_R}  
\partial_z [zf_n(z) \wea(|z|^2) \conj{k(z)} ] \, dA -
\frac{p}{2\|k\|^*_n} \int_{\D_R}   |z|^2f_n(z)\conj{k(z)} \wea'(|z|^2) \, dA \\
+ \frac{1}{\|k\|^*_n} &\left(1-\frac{p}{2}\right) \left\{
\int_{\D_R} 
\partial_{\conj{z}} \left[f_n(z) \conj{z K(z)} \wea(|z|^2)\right] \, dA - 
\int_{\D_R} |z|^2 f_n \conj{K} \wea'(|z|^2)\, dA \right\}
\end{split}
\]
where $K(z)= (1/z) \int_0^z k(\zeta)\, d\zeta$. 
Applying the Cauchy-Green theorem again and changing the 
variable of integration to $\theta$ in the integrals over the boundary 
of the disc shows that 
\begin{equation}\label{eq:exdisc}
\begin{split}
\frac{R^2}{2} &\int_0^{2\pi} |f_n(Re^{i\theta})|^p \wea(R^2)\, d\theta -
\int_{\D_R} |z|^2 |f_n(z)|^p \wea'(|z|^2) \, dA 
= \\
\frac{p}{2\|k\|^*_n} \frac{R^2}{2} &\int_0^{2\pi}  f_n(Re^{i\theta})  
\conj{k(Re^{i\theta})} \wea(R^2)  \, d\theta -
\frac{p}{2\|k\|^*_n} \int_{\D_R}   |z|^2 f_n(z)\conj{k(z)} \wea'(|z|^2)   \, dA \\
+ & \frac{1}{\|k\|^*_n} \left(1-\frac{p}{2}\right) 
\left\{ \frac{R^2}{2}\int_{0}^{2\pi} f_n(z) \conj{K(z)}  \wea(R^2) \, d\theta
- \int_{\D_R} |z|^2 f_n \conj{K} \wea'(|z|^2) \, dA \right\}
\end{split}
\end{equation}

Now, define the $p^{\textrm{th}}$ integral mean of 
an analytic function $f$ at radius $r<R$ by  
\[
{M_p(r,f)} = 
\left\{\int_0^{2\pi} |f(re^{i\theta})|^p \, d\theta \right\}^{1/p}
\] 
and define $M_p(R,f) = \lim_{r \rightarrow R^{-}} M_p(r,f).$ 
Note that this differs by a factor of $(2\pi)^{-1/p}$ from the 
usual definition. 
For $0 < r \le R$, let  
\begin{equation}
{D_p(r,f;\wea)} = 
\left\{ -\int_{\D_r} |z|^2 |f(z)|^p \wea'(|z|^2) \, dA \right\}^{1/p} .
\end{equation}

We write $D_p(r,f)$ for $D_p(r,f;\wea)$ when it is clear what the 
function $\wea$ is.  
It is clear that ${D_p(r,f)}$ is non-decreasing with $r$, and it is well 
known that the same is true for $M_p(r,f)$ (see \cite{D_Hp}, p.~9). 
We note in passing that
in at least one case $D_p(r,f)$ can be given a physical interpretation.
A function $f$ in the Fock space for 
$p=2$ can represent the state of 
a quantum harmonic oscillator, in which case $D_2(\infty, f)$ represents 
a quantity related to the expected energy of the oscillator.

Let $q$ be the conjugate exponent to $p$, so that $1/p + 1/q = 1$. 
Also, note that 
\[
\begin{split} 
M_q(r,zK) &=  \left \{ \int_{0}^{2\pi} |re^{i\theta}K(re^{i\theta})|^q \, d\theta 
\right\}^{1/q} = 
 \left \{ \int_{0}^{2\pi} \left|\int_0^r k(\rho e^{i\theta}) e^{i\theta} 
\, d\rho \right|^q  \, d\theta \right\}^{1/q} \\ &\le 
 \int_{0}^{r} \left\{ \int_0^{2\pi} 
|k(\rho e^{i\theta})|^q \, d\theta  \right\}^{1/q} \,d\rho 
= \int_0^r M_q(\rho,k) \, d\rho \le r M_q(r,k).
\end{split}
\]
Thus $M_q(r, K) \le M_q(r, k)$, which also implies that 
$D_q(r,K) \le D_q(r,k)$ since the measure $|z|^2 \wea'(|z|^2)$ is a 
radial measure. 

Let $\widehat{p} = \max(p-1,1)$. 
Returning to equation \eqref{eq:exdisc} and using H\"{o}lder's inequality, 
we see that 
\[
\begin{split}
\frac{R^2}{2} \wea(R^2) & {M_p^p(R,f_n)} + {D_p^p(R,f_n)} 
\\
&\le \frac{1}{\|k\|^*_n}  
\left\{
\frac{p}{2} \frac{R^2}{2}\wea(R^2){M_p(R,f_n)}{M_q(R,k)} + 
\frac{p}{2}{D_p(R,f_n)}{D_q(R,k)} \right. \\
&\quad
\left. + \left|1-\frac{p}{2}\right| \left[
\frac{R^2}{2}\wea(R^2){M_p(R,f_n)}{M_q(R,K)} 
+ D_p(R,f_n) D_q(R, K) \right]
\right\}
\\
&\le
\frac{1}{\|k\|^*_n} \left\{
\widehat{p}
\frac{R^2}{2}\wea(R^2){M_p(R,f_n)}{M_q(R,k)} + 
\widehat{p} {D_p(R,f_n)}{D_q(R,k)} \right\}.
\end{split}
\]
For ease of notation, 
define $N_p(r, g) = (r^2/2)^{1/p} \wea(r^2)^{1/p} M_p(r,g)$ for 
any analytic function $g$.  
Then the right side of the last displayed inequality is 
at most 
\[
\begin{split}
&\frac{\widehat{p}}{\|k\|^*_n}
 \left[ \left(
 \frac{R^{2}}{2} \wea(R^2)\right)^{1/p}  {M_p(R,f_n)} + {D_p(R,f_n)}
 \right] \times 
\\
& \qquad \qquad \qquad
 \left[ \left(
 \frac{R^{2}}{2} \wea(R^2)\right)^{1/q}  {M_q(R,k)} + {D_q(R,k)}
 \right] \\
&=\frac{\widehat{p}}{\|k\|^*_n}
\Big[ N_p(R,f_n) + {D_p(R,f_n)}\Big]
\Big[ 
N_q(R,k_n)+ {D_q(R,k)}
\Big] \\
&\le \frac{2^{1/q} \widehat{p}}{\|k\|^*_n}
\Big[N_p^p(R, f_n) + {D_p^p(R,f_n)}
\Big]^{1/p}
\Big[  {N_q(R,k)} + {D_q(R,k)}
\Big].
\end{split}
\]
And thus we have
\begin{equation*}
\Big(
N_p^p(R,f_n)+ {D_p^p(R,f_n)} 
\Big)^{1/q}
\le 
\frac{2^{1/q} \widehat{p}}{\|k\|^*_n}
\Big[ 
N_q(R,k) + {D_q(R,k)}
\Big].
\end{equation*}
If $r < R$, this implies that
\begin{equation}\label{eq:ineqdisc1a}
{N_p^p(r,f_n)} + {D_p^p(r,f_n)} 
\le
\frac{2^{1/q} \widehat{p}}{\|k\|^*_n}
\Big[
{N_q(R,k)} + {D_q(R,k)}
\Big]^q .
\end{equation}

Now let $f$ denote the solution of our extremal problem over 
the full space.  
Observe that as $n \rightarrow \infty$, we have 
$f_n \rightarrow f$ in $A^p_R(\wea(|z|^2))$ and 
$\|k\|_n^* \rightarrow \|k\|^*$ by 
Theorem \ref{thm:extspacecont}. 
Thus $f_n \rightarrow f$ uniformly on compact subsets of $\D_R$
by Proposition \ref{prop:apbanach}.   
So $M_p(r,f_n) \rightarrow M_p(r,f)$ and 
$D_p(r, f_n) \rightarrow D_p(r, f)$ as $n \rightarrow \infty$.
Also recall that $M_p(r,f)$ and $D_p(r,f)$ are increasing with $r$. 
Thus, in inequality \eqref{eq:ineqdisc1a}, if we  
let first $n \rightarrow \infty$ and then 
$r \rightarrow \infty$, we have 
\begin{equation}\label{eq:ineqdisc1}
{N_p^p(R,f)} + {D_p^p(R,f)} 
\le
\frac{2^{1/q} \widehat{p}}{\|k\|^*}
\Big[
{N_q(R,k)} + {D_q(R,k)}
\Big]^q .
\end{equation}

Now suppose that $k$ is not in $C^1(\close{D})$  
but that $M_q(R,k) < \infty$.  It is well known that there is a sequence 
of polynomials $k_n$ such that $M_q(R, k-k_n) \rightarrow 0$ as 
$n \rightarrow \infty$ (this follows from Theorem 2.6 in 
\cite{D_Hp}). 
Now since $M_q(r,g)$ increases with $r$ for any analytic function 
$g$, and since 
\[
\begin{split}
D_q^q(R,k-k_n) &= 
- \int_0^R r^2 M_q^q(r,k-k_n) \wea'(r) r \, dr \\
 &\le 
\left(-\int_0^R \wea'(r^2) r\, dr \right) M_q^q(R, k-k_n) R^2 \\
&=
\frac{1}{2} ( \wea(0) - \wea(R^2) )  M_q^q(R, k-k_n) R^2 
\end{split}
\]
we have that $D_q(R,k-k_n) \rightarrow 0$ as $n \rightarrow \infty$.    
Thus, by Minkowski's inequality, we have that 
$M_q(R, k) - M_q(R,k_n)$ and $D_q(R,k) - D_q(R, k_n)$ both approach 
$0$ as $n \rightarrow \infty$. 
Now, for $r<R$ we have 
\[
{N_p^p(r,f_n)} + {D_p^p(r,f_n)} 
\le
\frac{2^{1/q} \widehat{p}}{\|k\|^*}
\Big[
{N_q(R,k_n)} + {D_q(R,k_n)}
\Big]^q.
\]
By Theorem \ref{thm:extkernelcont}, as $n\rightarrow \infty$ we have 
$f_n \rightarrow f$ in the $A^p_R(\wea(|z|^2))$ norm, and thus 
$f_n \rightarrow f$ uniformly in 
$\{ |z| \le r\}$ by Proposition \ref{prop:apbanach}.  
Therefore, $D_p(f_n,r) \rightarrow D_p(f,r)$ and 
$M_p(f_n,r) \rightarrow M_p(f,r)$.  Thus we have that 
\[
{N_p^p(r,f)} + {D_p^p(r,f)} 
\le
\frac{2^{1/q} \widehat{p}}{\|k\|^*}
\Big[ {N_q(R,k)} + {D_q(R,k)}
\Big]^q.
\]
Letting $r \rightarrow R$ shows that inequality 
\eqref{eq:ineqdisc1} still holds. 
We summarize our results in a theorem. 

\begin{theorem}\label{thm:extbounddisc}
Let $1 < p < \infty$ and let $0 < R < \infty$. 
Let the function $\wea$ be analytic in a neighborhood 
of $[0,R^2)$, and let $\wea$ be positive, non-increasing,  
and non-constant on $[0,R^2)$. 
Suppose that $f$ is the extremal function in 
$A^p_R(\wea(|z|^2))$ for the integral kernel $k$.  Then 
\[
\frac{R^2}{2} \wea(R^2) {M_p^p(R,f)} + {D_p^p(R,f)} 
\le
\frac{2^{1/q} \widehat{p}}{\|k\|^*}
\left[ \left(
\frac{R^{2}}{2} \wea(R^2)\right)^{1/q}  {M_q(R,k)} + {D_q(R,k)}
\right]^q.
\]
\end{theorem}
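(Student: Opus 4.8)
The plan is to prove the inequality first under the extra hypothesis that the kernel $k$ lies in $C^1(\close{\D_R})$, and then to remove this hypothesis by approximation. The engine of the whole argument is a boundary-to-interior identity coming from the Cauchy--Green theorem. First I would take an analytic $f \in C^1(\close{\D_R})$ and rewrite the weighted boundary mean $\frac{R^2}{2}\int_0^{2\pi}|f(Re^{i\theta})|^p\wea(R^2)\,d\theta$ as an area integral: substituting $R^2\,d\theta = iz\,d\conj{z}$ on $|z|=R$ and applying Cauchy--Green to $|f|^p\wea(|z|^2)$ yields \eqref{eq:base}, whose left side is this boundary mean minus $\int_{\D_R}|z|^2|f|^p\wea'(|z|^2)\,dA$ and whose right side is $\int_{\D_R}\bigl(\frac{p}{2}zf'+f\bigr)|f|^{p-1}\conj{\sgn f}\,\wea(|z|^2)\,dA$. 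Since $\wea$ is non-increasing we have $\wea'\le 0$, so the left side is a sum of the two non-negative quantities $\frac{R^2}{2}\wea(R^2)M_p^p(R,f)$ and $D_p^p(R,f)$.

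Next I would specialize to $f=f_n$, the solution of the extremal problem over polynomials of degree at most $n$. The key observation is that $zf_n'(z)$ is again a polynomial of degree at most $n$, so the characterization in Theorem \ref{thm:exint} permits replacing $|f_n|^{p-1}\conj{\sgn f_n}$, when integrated against $\frac{p}{2}zf_n'+f_n$, by $\conj{k}/\|k\|^*_n$. This converts the right side of \eqref{eq:base} into an integral against $k$. I would then integrate by parts a second time, using the identity $zf_n'\wea(|z|^2)=\partial_z[zf_n\wea(|z|^2)]-zf_n\wea'(|z|^2)\conj z-f_n\wea(|z|^2)$ for the $\partial_z$ piece, and writing $\conj{k}=\partial_{\conj z}\conj{zK}$ with $K(z)=(1/z)\int_0^z k(\zeta)\,d\zeta$ to make Cauchy--Green applicable to the leftover term. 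Sending the $\partial_z$ and $\partial_{\conj z}$ integrals back to the boundary produces \eqref{eq:exdisc}. This double application of Cauchy--Green with the auxiliary function $K$ is the step I expect to require the most care, since one must check that the boundary terms are exactly those in \eqref{eq:exdisc} and that the derivatives of $\wea(|z|^2)$ appearing in each integration by parts are balanced precisely, with no dropped error terms.

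The estimation is then bookkeeping. I would bound the four terms on the right of \eqref{eq:exdisc} by H\"older's inequality, producing the products $M_p(R,f_n)M_q(R,k)$ and $D_p(R,f_n)D_q(R,k)$ together with the analogous products involving $K$. The coefficients $\frac{p}{2}$ and $|1-\frac{p}{2}|$ sum to $\widehat p$, and the radial bounds $M_q(r,K)\le M_q(r,k)$, $D_q(r,K)\le D_q(r,k)$ collapse the $K$-products into the $k$-products. Writing $N_p(r,g)=(r^2/2)^{1/p}\wea(r^2)^{1/p}M_p(r,g)$, the surviving right side is $\widehat p\,[N_p(R,f_n)N_q(R,k)+D_p(R,f_n)D_q(R,k)]$, which I bound by $\widehat p\,[N_p(R,f_n)+D_p(R,f_n)][N_q(R,k)+D_q(R,k)]$. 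Since the left side of \eqref{eq:exdisc} equals $N_p^p(R,f_n)+D_p^p(R,f_n)$, applying the power-mean inequality $N_p+D_p\le 2^{1/q}(N_p^p+D_p^p)^{1/p}$ and dividing by $(N_p^p(R,f_n)+D_p^p(R,f_n))^{1/p}$ gives \eqref{eq:ineqdisc1a} on every disc of radius $r<R$.

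Finally I would pass to the limit. Fixing $r<R$ and letting $n\to\infty$, the convergences $f_n\to f$ in norm and $\|k\|^*_n\to\|k\|^*$ from Theorem \ref{thm:extspacecont}, together with the resulting local uniform convergence, replace $f_n$ by $f$; then letting $r\to R$ and using monotonicity of $M_p(\cdot,f)$ and $D_p(\cdot,f)$ yields \eqref{eq:ineqdisc1}, which is the theorem for smooth $k$. To handle a general kernel with $M_q(R,k)<\infty$ but $k\notin C^1(\close{\D_R})$, I would choose polynomials $k_n$ with $M_q(R,k-k_n)\to 0$, observe that the radial bound forces $D_q(R,k-k_n)\to 0$ as well, and invoke Theorem \ref{thm:extkernelcont} to conclude that the corresponding extremal functions converge, so that the inequality survives this final limit.
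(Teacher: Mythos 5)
Your proposal is correct and follows essentially the same route as the paper: the same Cauchy--Green identity \eqref{eq:base}, the same use of Theorem \ref{thm:exint} on the polynomial subspaces via the observation that $zf_n'$ has degree at most $n$, the same second integration by parts with the auxiliary function $K(z)=(1/z)\int_0^z k(\zeta)\,d\zeta$, the same H\"older and power-mean estimates collapsing the coefficients to $\widehat p$, and the same two-stage limiting argument ($n\to\infty$, then $r\to R$, then polynomial approximation of $k$). No gaps.
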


\section{Regularity of Extremal Functions in Fock type Spaces}\label{sec:regplane}

In this section we consider regularity of extremal functions in the 
Fock type spaces $A^p_\infty(\nu)$. 
The measures we consider are those which satisfy our previous
assumptions and for which
$\lim_{r \rightarrow \infty} r^n \wea(r^2) = 0$ and 
$\lim_{r \rightarrow \infty} r^n \wea'(r^2) = 0$
for all integers $n$, and for which polynomials are dense in 
$A^p_\infty(\wea(|z|^2))$ and 
$A^q_\infty(\wea(|z|^2) - |z|^2 \wea'(|z|^2))$, where $q$ is 
the conjugate exponent to $p$. 
In Section \ref{sec:polydense}, we give some sufficient 
conditions for polynomials to be dense in these spaces. 
For example, for $1 < p < \infty$ polynomials are dense in the Fock space 
(for which $\wea(z) = e^{-\alpha z}$).

Recall that equation \eqref{eq:base} says, for 
$f$ in $C^1(\close{\D_R})$ and analytic in $\D_R$, and for 
$0 < R < \infty$, that 
\[
\begin{split}
\frac{R^2}{2} &\int_0^{2\pi} |f(Re^{i\theta})|^p \wea(R^2)\, d\theta -
\int_{\D_R} |z|^2 |f(z)|^p w'(|z|^2) \, dA  \\ 
=&\int_{\D_R} (\frac{p}{2} z f'(z) + f(z)) |f(z)|^{p-1} 
(\sgn \conj{f(z)})  \wea(|z|^2) \, dA.
\end{split}
\]
Suppose that $f$ is a polynomial. 
Then letting $R \rightarrow \infty$ in equation \eqref{eq:base} 
gives 
\[
\begin{split}
-\int_{\mathbb{C}} |z|^2 |f(z)|^p w'(|z|^2) \, dA 
=\int_{\mathbb{C}} \left(\frac{p}{2} z f'(z) + f(z)\right) |f(z)|^{p-1} 
(\sgn \conj{f(z)})  \wea(|z|^2) \, dA.
\end{split}
\]

Consider the extremal problem for the space $A^p_\infty(\wea(|z|^2))$ 
with kernel $k$, where $k$ is a polynomial. 
Denote the solution to the extremal problem \eqref{eq:extprob} 
over polynomials of 
degree at most $n$ by $f_n$. 

As before, the right hand side of equation \eqref{eq:base} equals
\[
\frac{1}{\|k\|^*_n} \int_{\mathbb{C}} \left(\frac{p}{2} zf_n'(z)+f_n(z)
\right) 
\conj{k(z)} \wea(|z|^2)\, dA
\]
and the same computation as before gives
\[
\begin{split}
-&\int_{\mathbb{C}} |z|^2 |f_n(z)|^p \wea'(|z|^2) \, dA 
= \\
&\lim_{R \rightarrow \infty}
\frac{1}{\|k\|^*_n}
\left\{
\frac{p}{2} \int_{\D_R}  
\partial_z [zf_n(z) \wea(|z|^2) \conj{k(z)} ] \, dA -
\frac{p}{2} \int_{\D_R}   |z|^2f_n(z)\conj{k(z)} \wea'(|z|^2) 
\, dA \right. 
\\
+ & \left. 
\left(1-\frac{p}{2}\right) \int_{\D_R} 
\left[
\partial_{\conj{z}} \left[f_n(z) \conj{z K(z)} \wea(|z|^2)\right] \, dA
 - \int_{\D_R} |z|^2 f_n \conj{K} \wea'(|z|^2) \, dA
\right]
\right\} , 
\end{split}
\]
where $K(z) = (1/z) \int_0^z k(\zeta) \, d\zeta$ as before.  
We now use the Cauchy-Green theorem to see that 
\[
\begin{split}
-&\int_{\mathbb{C}} |z|^2 |f_n(z)|^p \wea'(|z|^2) \, dA 
= \\
\lim_{R \rightarrow \infty} & \frac{1}{\|k\|^*_n} \left\{
\frac{p}{2}  \frac{i}{2} \int_{\partial \D_R}  zf_n(z)  \conj{k(z)}  
\, \wea(|z|^2) d\conj{z} -
\frac{p}{2} \int_{\D_R}   |z|^2 f_n(z)\conj{k(z)} \wea'(|z|^2) \, dA   
\right. \\
+ & \left. \left(1-\frac{p}{2}\right) 
\left[
\frac{1}{2i} \int_{\D_R} 
f_n(z) \conj{K(z)} \wea(|z|^2) \conj{z} \, dz
 - \int_{\D_R} |z|^2 f_n \conj{K} \wea'(|z|^2) \, dA(z)
\right]
\right\}.
\end{split}
\]
Since $f_n$ and $k$ are polynomials, our assumptions on $\wea$ imply that
\[
\begin{split}
-\int_{\mathbb{C}} |z|^2 |f_n(z)|^p \wea'(|z|^2) \, dA &=
-\frac{p}{2\|k\|^*_n} \int_{\mathbb{C}}   |z|^2 f_n(z)\conj{k(z)} 
\wea'(|z|^2) \, dA  \\ 
& \qquad - \left(1 - \frac{p}{2}\right) \int_{\D_R} |z|^2 f_n \conj{K} 
\wea'(|z|^2) \, dA(z).  
\end{split}
\]
Applying H\"{o}lder's inequality, we see that 
\[
\begin{split}
D_p^p(\infty,f_n) &\le \frac{p}{2\|k\|^*_n}  D_p(\infty, f_n) D_q(\infty, k) + 
\frac{1}{\|k\|^*_n} 
\left| 1 - \frac{p}{2} \right| D_p(\infty, f_n) D_q(\infty, K)
\\ &\le 
\frac{\widehat{p}}{\|k\|^*_n}  D_p(\infty, f_n) D_q(\infty, k),
\end{split}
\]
so that
\[
D_p(\infty,f_n) \le \left[ \frac{\widehat{p}}{\|k\|^*_n} 
D_q(\infty, k) \right]^{1/(p-1)}.
\]
Therefore,
\[
D_p(R,f_n) \le \left[ \frac{\widehat{p}}{\|k\|^*_n} D_q(\infty, k) \right]^{1/(p-1)},
\]
where $R > 0$ is arbitrary.

Let $f$ denote the solution to the extremal problem 
over the full space. 
Then $f_n \rightarrow f$  in $A^p_\infty(\wea(|z|^2))$ 
and $\|k\|^*_n \rightarrow \|k\|^*$
by Theorem \ref{thm:extspacecont}. Also, by 
Proposition \ref{prop:apbanach}, 
$f_n \rightarrow f$ uniformly on $|z| \le R$. 
Letting $n \rightarrow \infty$ and then letting $R \rightarrow \infty$ 
in the above displayed inequality gives 
\[
D_p(\infty,f) \le \left[ \frac{\widehat{p}}{\|k\|^*} D_q(\infty, k) \right]^{1/(p-1)}.
\]

Lastly, if $k$ is not a polynomial, we let $k_n$ 
be as sequence of polynomials approaching $k$ in 
$A^q_\infty(\wea(|z|^2))$, such that 
$D_q(\infty, k_n) \rightarrow D_q(\infty, k)$ as 
$n \rightarrow \infty$. This can be done since polynomials 
are dense in the space $A^q_\infty(\wea(|z|^2) - |z|^2 \wea'(|z|^2))$. 
Let $f_n$ be the solution to the extremal problem with kernel $k_n$. 
Then we have, for fixed $R > 0$, that 
\[
D_p(R,f_n) \le \left[ \frac{\widehat{p}}{\|k_n\|^*} D_q(\infty, k_n) \right]^{1/(p-1)}.
\]
Now $f_n \rightarrow f$ 
in $A^p_\infty(\omega(|z|^2))$ by Theorem \ref{thm:extkernelcont},
and thus uniformly for $|z| \le R$ by Proposition \ref{prop:apbanach}. Also,  
$\|k_n\|^* \rightarrow \|k\|^*$ as $n \rightarrow \infty$, 
since $\|k - k_n\|^*$ is bounded above by  
$\|k-k_n\|_{A^q_\infty(\wea(|z|^2))}$, which approaches $0$.  Therefore, we have 
that 
\[
D_p(R,f) \le \left[ \frac{\widehat{p}}{\|k\|^*} D_q(\infty, k) \right]^{1/(p-1)}.
\]
Letting $R \rightarrow \infty$ gives 
\[
D_p(\infty,f) \le \left[ \frac{\widehat{p}}{\|k\|^*} D_q(\infty, k) \right]^{1/(p-1)}.
\]

Again, we state our results in a Theorem.
\begin{theorem}\label{thm:extboundplane} 
Let $1 < p < \infty$. 
Let the function $\wea$ be analytic in a neighborhood 
of $[0,\infty)$, and let $\wea$ be, positive, non-increasing,  
and non-constant on $[0,\infty)$.
Also suppose that 
$\lim_{r \rightarrow \infty} r^n \wea(r^2) = 
\lim_{r \rightarrow \infty} r^n \wea'(r^2) = 0$
for all integers $n$, and that the polynomials are dense in 
$A^p_\infty(\wea(|z|^2))$ and in 
$A^q_\infty(\wea(|z|^2) - |z|^2 \wea'(|z|^2))$.
 Suppose that $f$ is the extremal function in 
$A^p_R(\wea(|z|^2))$ for the integral kernel $k$.  Then 
\[
D_p(\infty,f) \le \left[ \frac{\widehat{p}}{\|k\|^*} D_q(\infty, k) \right]^{1/(p-1)}.
\]
\end{theorem}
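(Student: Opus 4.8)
The plan is to follow the strategy of Theorem \ref{thm:extbounddisc}, using the decay hypotheses on $\wea$ to send the radius to infinity and thereby discard every boundary contribution. First I would take $f$ to be a polynomial in the fundamental identity \eqref{eq:base} and let $R \to \infty$. For a polynomial of degree $m$ the boundary mean term $(R^2/2)\wea(R^2)\int_0^{2\pi}|f(Re^{i\theta})|^p\,d\theta$ is bounded by $C R^N \wea(R^2)$ for a suitable integer $N$, so the hypothesis $\lim_{r\to\infty} r^n\wea(r^2)=0$ kills it, leaving
\[
-\int_{\C} |z|^2 |f(z)|^p \wea'(|z|^2)\, dA = \int_{\C} \left(\frac{p}{2} z f'(z) + f(z)\right) |f(z)|^{p-1}(\sgn \conj{f(z)})\, \wea(|z|^2)\, dA.
\]

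Next, restricting the extremal problem to polynomials of degree at most $n$, letting $f_n$ be the corresponding solution and $\|k\|^*_n$ the maximum, I would invoke the extremal characterization of Theorem \ref{thm:exint}: since $\frac{p}{2} z f_n'(z) + f_n(z)$ is again a polynomial of degree at most $n$, the right-hand side equals $\frac{1}{\|k\|^*_n}\int_\C(\frac{p}{2} z f_n' + f_n)\conj{k}\,\wea(|z|^2)\,dA$. The manipulation from the disc case then applies verbatim---rewriting $z f_n'(z)\wea(|z|^2)$ as a $\partial_z$ derivative minus correction terms, introducing $K(z)=(1/z)\int_0^z k(\zeta)\,d\zeta$ to absorb the $(1-p/2)$ term, and applying the Cauchy--Green theorem twice---to produce area integrals plus boundary integrals over $\partial\D_R$. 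Because $f_n$ and $k$ are polynomials, the decay hypotheses $r^n\wea(r^2)\to 0$ and $r^n\wea'(r^2)\to 0$ (for every $n$) force all boundary integrals to $0$ as $R\to\infty$, leaving
\[
-\int_\C |z|^2 |f_n|^p \wea'(|z|^2)\,dA = -\frac{p}{2\|k\|^*_n}\int_\C |z|^2 f_n \conj{k}\,\wea'(|z|^2)\,dA - \Big(1-\frac{p}{2}\Big)\frac{1}{\|k\|^*_n}\int_\C |z|^2 f_n \conj{K}\,\wea'(|z|^2)\,dA.
\]

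Now I would apply H\"older's inequality against the nonnegative radial measure $-|z|^2\wea'(|z|^2)\,dA$ (nonnegative since $\wea$ is decreasing), producing $D_p(\infty,f_n)$ together with $D_q(\infty,k)$ and $D_q(\infty,K)$. Using the radial-measure bound $D_q(\infty,K)\le D_q(\infty,k)$ established in Section \ref{sec:regdisc} and the identity $p/2+|1-p/2|=\widehat{p}$, this gives $D_p^p(\infty,f_n)\le \frac{\widehat{p}}{\|k\|^*_n}D_p(\infty,f_n)\,D_q(\infty,k)$. Dividing by $D_p(\infty,f_n)$ and taking a $(p-1)$-th root yields $D_p(\infty,f_n)\le[\frac{\widehat{p}}{\|k\|^*_n}D_q(\infty,k)]^{1/(p-1)}$, and hence the same bound for $D_p(R,f_n)$ at every finite $R$.

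Two limiting arguments then give the theorem. Holding $k$ a polynomial and letting $n\to\infty$, Theorem \ref{thm:extspacecont} supplies $f_n\to f$ in norm and $\|k\|^*_n\to\|k\|^*$, Proposition \ref{prop:apbanach} upgrades this to uniform convergence on $\{ |z| \le R \}$ so that $D_p(R,f_n)\to D_p(R,f)$, and then $R\to\infty$. For a general kernel $k$ with $D_q(\infty,k)<\infty$, I approximate by polynomials $k_n$ in $A^q_\infty(\wea(|z|^2)-|z|^2\wea'(|z|^2))$---exactly the density hypothesis in the statement---so that $D_q(\infty,k_n)\to D_q(\infty,k)$, and use Theorem \ref{thm:extkernelcont} with $\|k-k_n\|^*\le\|k-k_n\|_{A^q_\infty(\wea(|z|^2))}$ to pass to the limit. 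I expect the main obstacle to be verifying rigorously that the boundary integrals vanish as $R\to\infty$: each term coming from the two Cauchy--Green applications must be dominated by polynomial growth times $\wea(R^2)$ or $\wea'(R^2)$ so that the decay hypotheses apply. This is precisely what forces the restriction to polynomial kernels in the first stage and makes the density of polynomials in the auxiliary weighted space $A^q_\infty(\wea(|z|^2)-|z|^2\wea'(|z|^2))$ indispensable in the second.
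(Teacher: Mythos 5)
Your proposal reproduces the paper's own argument essentially step for step: the polynomial version of \eqref{eq:base} with $R\to\infty$, the restriction to degree-$n$ polynomial extremal problems via Theorem \ref{thm:exint}, the Cauchy--Green manipulations with $K(z)=(1/z)\int_0^z k(\zeta)\,d\zeta$ whose boundary terms vanish by the decay hypotheses, the H\"older estimate combining $D_q(\infty,K)\le D_q(\infty,k)$ with $p/2+|1-p/2|=\widehat{p}$, and the two-stage limit (first $n\to\infty$, $R\to\infty$ for polynomial kernels, then polynomial approximation of $k$ in $A^q_\infty(\wea(|z|^2)-|z|^2\wea'(|z|^2))$). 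The approach and all key ingredients match the paper's proof.
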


One could question whether the condition 
$D_p(\infty,f) < \infty$ is implied by the condition 
$f \in A^p_\infty(\wea(|z|)^2)$, in which case the above theorem 
would be less interesting.  
However, in general $f \in A^p_\infty(\wea(|z|)^2)$ does not 
imply $D_p(\infty,f) < \infty$.
For example, consider the Fock space with measure $e^{-\alpha |z|^2}$.  
In this case, the statement that $D_p(\infty,f) < \infty$ is equivalent 
to $f$ being in the space $A^p_\infty(|z|^2 e^{-\alpha|z|^2})$. 
Now, the norm of 
$z^n$ in the original Fock space is 
\[
\left[ 
\frac{\pi}{\alpha^{np/2}} \Gamma \left( \frac{np}{2}+1 \right)
\right]^{1/p}, 
\]
while its norm in the second space is 
\[
\left[
\frac{\pi}{\alpha^{np/2+1}} \Gamma \left( \frac{np}{2}+2 \right)
\right]^{1/p} .
\]
The ratio of the second norm to the first is $((np + 2)/(2\alpha))^{1/p}$, 
which is unbounded in $n$.  If every element in the Fock space were in 
$A^p_\infty(|z|^2 e^{-\alpha|z|^2})$, then 
by the closed graph theorem the identity map
from the Fock space into $A^p_\infty(|z|^2 e^{-\alpha|z|^2})$ 
would be bounded, which contradicts the above analysis of the 
norms of the monomials. 
Thus, $f \in A^p_\infty(e^{-\alpha |z|^2})$ does not imply that 
$f \in A^p_\infty(|z|^2 e^{-\alpha |z|^2})$. 

\section{Growth of Integral Means and Log-Convex Functions}\label{sec:logconvex}

Theorem \ref{thm:extboundplane} does not 
bound the quantity 
$\lim_{r \rightarrow \infty} r^2 \wea(r^2) M_p^p(r,f)$, even though a 
similar term was bounded in Theorem \ref{thm:extbounddisc}.  
Thus, by analogy with Theorem \ref{thm:extbounddisc}, 
it is natural to ask whether 
$\wea(r^2) M_p^p(r,f)$ is bounded as $r \rightarrow \infty$
if $r^2 \wea(r^2) M_q^q(r,k)$ is bounded as $r \rightarrow \infty$ 
and if $D_q(\infty,k)$ is bounded.  For certain measures, we show 
that this is the case.  In fact, if $D_q(\infty,k)$ is bounded, so 
is $D_p(\infty,f)$ by Theorem \ref{thm:extboundplane}, and
in this section we show that for certain measures 
$\wea$, the condition $D_p(f,\infty) < \infty$ implies that 
$r^3 \wea(r^2) M_p^p(r,f) \rightarrow 0$ as $r \rightarrow \infty$ for 
any entire function $f$. 

It will simplify matters if we introduce some notation. 
Let $\lambda(x)$ be a positive, increasing, smooth function 
defined for $x \ge R$, where $R \ge 0$.  
Now, let $X = \log x$ and $Y = \log y$.  Define 
$\nu(X) = \log(\lambda(x)) = \log(\lambda(e^X))$. 
Let $g(x)$ be differentiable and a log-convex function of $\log x$, 
i.e~let $\log g(x) = \log g(e^X)$ be a convex function of $X$. 
Let $\widetilde{g}(X) = g(e^X)$ and $\widetilde{\nu}(X) = \nu(e^X)$.

Now suppose that for some $x_1 > 0$ we have 
$g(x_1) = \lambda(x_1)$ but that 
$g(x) \le \lambda(x)$ for some $x < x_1$. 
Then for some $x_0$ such that $x < x_0 \le x_1$ we must have 
$
g(x_0) = \lambda(x_0)
$ 
and $g'(x_0) \ge \lambda'(x_0)$, which implies that 
\[
\frac{d \log \widetilde{g}(X)}{dX} (X_0) \ge 
\frac{d \widetilde{\nu}}{dX}(X_0).
\]
Now let 
\[
Y = \widetilde{\nu}(X_0) + \left(\frac{d \widetilde{\nu}}{dX}(X_0) \right) 
\cdot (X-X_0).
\]  
For $X \ge X_0$, the function $Y$ lies below the line tangent to the 
function $\log \widetilde{g}(X)$ at $X_0$.  
Since $\log \widetilde{g}(X)$ is a convex function of $X$, this means that 
$Y \le \log \widetilde{g}(X)$ for all $X \ge X_0$. 

We compute that 
\[
\frac{d\widetilde{\nu}}{dX} = \frac{d \log \lambda(e^X)}{dX} = 
\frac{e^X \lambda'(e^X)}{\lambda(e^X)} = \frac{x \lambda'(x)}{\lambda(x)}.
\]
Let $e^Y = y$.  Then we have 
\[
y = \lambda(x_0) \left( \frac{x}{x_0} \right)^{x_0 \lambda'(x_0)
  / \lambda(x_0)}.
\]
Now let 
\[
S(x_0, \lambda) = \int_{x_0}^\infty y \lambda(x)^{-1}\, dx = 
\int_{x_0}^\infty  
    \frac{\lambda(x_0)}{\lambda(x)} \left( \frac{x}{x_0} \right)^{x_0 \lambda'(x_0)
  / \lambda(x_0)}
\, dx.
\]
Note that $S(x_0,\alpha \lambda) = S(x_0, \lambda)$ for 
$\alpha \ne 0$. 
From the discussion above, we have the following proposition.
\begin{lemma}
Let $g$ and $\lambda$ be as above. 
Suppose that there is some $x_1$ such that $g(x_1)=\lambda(x_1)$ and that 
there is some $x_2$ such that $R \le x_2 < x_1$ and such that 
$g(x_2) < \lambda(x_2)$.  Then there is some $x_0$ such that 
$x_2 < x_0 \le x_1$ and $g(x_0) = \lambda(x_0)$, and such that 
\[
\int_{x_0}^\infty  g(x) \lambda(x)^{-1} \, dx \ge S(x_0, \lambda).
\]
\end{lemma}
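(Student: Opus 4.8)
The plan is to carry out, rigorously, the geometric picture sketched in logarithmic coordinates just before the statement, supplying the two steps that need genuine justification: the existence of the crossing point $x_0$ with the stated derivative inequality, and the passage from a tangent‑line estimate to the integral bound. I would begin by producing $x_0$. Put $h = g - \lambda$ on $[x_2,x_1]$; by hypothesis $h(x_2) < 0$ and $h(x_1) = 0$. Define
\[
x_0 = \inf\{\, t \in [x_2,x_1] : h \ge 0 \text{ on } [t,x_1] \,\},
\]
which is well defined since $x_1$ belongs to the set. Since $h(x_2)<0$ we have $x_0 > x_2$, and a continuity argument shows $h(x_0) = 0$ (if $h(x_0)>0$ then $h>0$ slightly to the left of $x_0$, contradicting the infimum), i.e. $g(x_0) = \lambda(x_0)$. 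Because $h \ge 0$ on $[x_0,x_1]$ while $h(x_0) = 0$, the one‑sided difference quotients at $x_0$ are nonnegative, so $h'(x_0) \ge 0$, that is $g'(x_0) \ge \lambda'(x_0)$. Multiplying by $x_0/\lambda(x_0) = x_0/g(x_0) > 0$ and using the chain rule gives exactly $\frac{d\log\widetilde{g}}{dX}(X_0) = \frac{x_0 g'(x_0)}{g(x_0)} \ge \frac{x_0\lambda'(x_0)}{\lambda(x_0)} = \frac{d\widetilde{\nu}}{dX}(X_0)$, where $X_0 = \log x_0$.

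Next I would run the convexity estimate. Consider the affine function $Y(X) = \widetilde{\nu}(X_0) + \frac{d\widetilde{\nu}}{dX}(X_0)\,(X-X_0)$. At $X_0$ it agrees in value with $\log\widetilde{g}$, since $\widetilde{\nu}(X_0) = \log\lambda(x_0) = \log g(x_0) = \log\widetilde{g}(X_0)$, and by the slope inequality just obtained its slope does not exceed that of the tangent line to $\log\widetilde{g}$ at $X_0$. Hence for $X \ge X_0$ the graph of $Y$ lies below that tangent line, and since $\log\widetilde{g}$ is convex its tangent line lies below $\log\widetilde{g}$ itself; combining these, $Y(X) \le \log\widetilde{g}(X)$ for all $X \ge X_0$. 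Exponentiating and inserting the value $\frac{d\widetilde{\nu}}{dX}(X_0) = x_0\lambda'(x_0)/\lambda(x_0)$ yields, in the original variable,
\[
g(x) \ge \lambda(x_0)\left(\frac{x}{x_0}\right)^{x_0\lambda'(x_0)/\lambda(x_0)} = y
\qquad (x \ge x_0).
\]
Dividing by $\lambda(x) > 0$ and integrating over $[x_0,\infty)$, monotonicity of the integral gives $\int_{x_0}^\infty g(x)\lambda(x)^{-1}\,dx \ge \int_{x_0}^\infty y\,\lambda(x)^{-1}\,dx = S(x_0,\lambda)$, which is the assertion.

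I expect the only delicate point to be the selection of $x_0$ together with the derivative inequality $g'(x_0) \ge \lambda'(x_0)$: one must pick the place where $g$ overtakes $\lambda$ \emph{from below} rather than an arbitrary intersection, and then argue the one‑sided derivative sign carefully — the infimum definition guarantees $h \ge 0$ on all of $[x_0,x_1]$, so the forward difference quotients at $x_0$ are nonnegative and pass to the limit (with the endpoint case $x_0 = x_1$ handled by the analogous left‑hand quotients). Everything after that is the standard tangent‑line estimate for convex functions and a monotone integral comparison, both routine.
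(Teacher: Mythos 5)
Your proof is correct and follows essentially the same route as the paper, whose ``proof'' is just the informal discussion preceding the lemma (find a crossing point $x_0$ where $g$ meets $\lambda$ from below with $g'(x_0)\ge\lambda'(x_0)$, then use the tangent-line estimate for the convex function $\log\widetilde{g}$ and integrate). You have in fact supplied the details the paper leaves implicit, namely the infimum construction of $x_0$ and the one-sided difference-quotient argument for $g'(x_0)\ge\lambda'(x_0)$, including the endpoint case $x_0=x_1$.
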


We now show that if 
\[
\int_0^\infty  g(x) \lambda(x)^{-1}\, dx < \infty
\]
and if $\liminf_{x\rightarrow \infty} S(x, \lambda) > 0$, we have
$\lim_{x \rightarrow \infty} g(x) \lambda(x)^{-1} = 0$. 
Suppose, for the sake of contradiction, that 
$\limsup_{x \rightarrow \infty} g(x) \lambda(x)^{-1} = 2k >0$.  
From the fact that the 
integral above is finite we must have 
$\liminf_{x \rightarrow \infty} g(x) \lambda(x)^{-1} = 0$. 
Thus, there must be some sequence of points 
$a_1 < b_1 < a_2 < b_2 \cdots$ such that 
$g(a_j) \lambda(a_j)^{-1} < k$ and 
$g(b_j) \lambda(b_j)^{-1} = k$, and such that $b_n \rightarrow \infty$ 
and $a_n \rightarrow \infty$ as $n \rightarrow \infty$.
Thus if we define $\lambda_k = k \lambda$, the previous proposition  
shows that there is some sequence of points $x_n \rightarrow \infty$ 
such that 
\[
\int_{x_n}^\infty  g(x) \lambda_k(x)^{-1} \, dx \ge 
S(x_n, \lambda_k)
\]
for all $n$. As $n \rightarrow \infty$, the left hand side of the previous
inequality must approach $0$.  So if we can show for all $k>0$ 
that $\liminf_{x \rightarrow \infty} S(x, \lambda_k) > 0$, we will
obtain a contradiction, showing in fact that $k = 0$. But
$S(x, \lambda_k) = S(x, \lambda)$ for $k \ne 0$.
Thus, we have obtained a contradiction between our assumptions 
and the supposition that $k \ne 0$.  In summary, we have the 
following theorem. 

\begin{theorem}\label{thm:logconvexlimit}
Suppose that $\liminf_{x \rightarrow \infty} S(x, \lambda) > 0$.  Then 
if $g(x)$ is a log-convex function of 
$\log x$ such that 
\[
\int_0^\infty  g(x) \lambda(x)^{-1} \, dx < \infty,
\]
we have that $\lim_{x \rightarrow \infty} g(x) \lambda^{-1}(x) = 0$. 
\end{theorem}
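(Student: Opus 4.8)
The plan is to argue by contradiction, playing the finiteness of $\int_0^\infty g(x)\lambda(x)^{-1}\,dx$ against the lower bound supplied by the preceding lemma. So I would begin by supposing that $\limsup_{x\to\infty} g(x)\lambda(x)^{-1} = 2k$ for some $k>0$. The first observation is that convergence of the integral forces $\liminf_{x\to\infty} g(x)\lambda(x)^{-1}=0$: if this liminf were some $c>0$, then $g(x)\lambda(x)^{-1}\ge c/2$ for all large $x$, and integrating would make the tail, hence the whole integral, diverge. Thus $g\lambda^{-1}$ oscillates between values near $0$ and values near $2k$.

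Next, since $g$ and $\lambda$ are continuous and $\lambda$ is positive, $g\lambda^{-1}$ is continuous, and the intermediate value theorem lets me extract interlacing sequences $a_1 < b_1 < a_2 < b_2 < \cdots$, with $a_j, b_j \to \infty$, such that $g(a_j)\lambda(a_j)^{-1} < k$ and $g(b_j)\lambda(b_j)^{-1} = k$. Writing $\lambda_k = k\lambda$, these read $g(a_j) < \lambda_k(a_j)$ and $g(b_j) = \lambda_k(b_j)$, which is exactly the hypothesis of the preceding lemma applied with $\lambda_k$ in place of $\lambda$, taking $x_2 = a_j$ and $x_1 = b_j$ (note $a_j \ge R$ once $j$ is large). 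The lemma then produces points $x_j$ with $a_j < x_j \le b_j$, hence $x_j \to \infty$, satisfying
\[
\int_{x_j}^\infty g(x)\lambda_k(x)^{-1}\,dx \ge S(x_j, \lambda_k).
\]

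The contradiction comes from letting $j \to \infty$. On the left, $\int_{x_j}^\infty g\lambda_k^{-1}\,dx = k^{-1}\int_{x_j}^\infty g\lambda^{-1}\,dx$ is $k^{-1}$ times the tail of a convergent integral, so it tends to $0$. On the right, the scaling invariance $S(x_j,\lambda_k) = S(x_j,\lambda)$ together with the hypothesis $\liminf_{x\to\infty} S(x,\lambda) > 0$ gives $\liminf_j S(x_j,\lambda_k) > 0$, since $\{x_j\}$ tends to infinity. This is impossible, so $k = 0$; as $g\lambda^{-1}\ge 0$, a vanishing limsup then forces $\lim_{x\to\infty} g(x)\lambda(x)^{-1} = 0$.

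The step I expect to carry the real weight is invoking the lemma correctly: the oscillation must be converted into exact crossings $g(b_j) = \lambda_k(b_j)$ (the lemma needs genuine equality at its upper endpoint) together with a strict dip $g(a_j) < \lambda_k(a_j)$ below, and it is precisely the scale invariance of $S$ that prevents the right-hand lower bound from degenerating under the rescaling by $k$. The log-convexity of $g$ in $\log x$ does not reappear explicitly at this stage, having already been spent in establishing the lemma; the only genuinely delicate bookkeeping is ensuring $x_j \to \infty$, so that both the tail estimate and the $\liminf$ hypothesis apply to the same sequence.
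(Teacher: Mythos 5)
Your proposal is correct and follows essentially the same route as the paper: contradiction from $\limsup = 2k>0$, forcing $\liminf = 0$ from integrability, extracting the interlacing sequences $a_j<b_j$, applying the preceding lemma to $\lambda_k = k\lambda$, and using the scale invariance $S(x,\lambda_k)=S(x,\lambda)$ to contradict the vanishing of the integral's tail. The extra care you take (invoking the intermediate value theorem to get exact equality at $b_j$, and checking $x_j\to\infty$) only makes explicit what the paper leaves implicit.
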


To apply this theorem in our situation, we choose
$\lambda(x) = -1/\wea'(x^2)$. 
Recall that if $f$ is an analytic function then
 $M_p(r,f)$ is a log convex function of $\log r$ 
(see \cite{D_Hp}, p.~9).  Now, 
let $g(x) = x^3 M_p^p(x,f)$.  Letting $X = \log x$, we have 
\[
\log g(e^X) = 3X + p \log M_p(e^X, f).
\]
Since both $X$ and $\log M_p(e^X,f)$  are convex functions of $X$, so is 
$g(x)$.  
Suppose that 
\[
D_p^p(\infty,f;\omega) = 
\int_{\mathbb{C}} |z|^2 |f(z)|^p \lambda(|z|)^{-1} < \infty,
\]
which means that 
\[
\int_0^\infty x^3 M_p^p(x,f) \lambda(x)^{-1} = 
\int_0^\infty  g(x) \lambda(x)^{-1}  
< \infty.
\]
If $\liminf_{x \rightarrow \infty} S(x, \lambda) > 0$, then 
by Theorem \ref{thm:logconvexlimit} we have 
$\lim_{r \rightarrow \infty} r^3 M_p^p(r,f) \wea'(r^2) = 0$.  
Thus we have the following theorem.
\begin{theorem}
Suppose that 
$\liminf_{x \rightarrow \infty} S(x, -1/\wea'(r^2)) > 0$
and that there is some positive constant $C$ such that 
$-\wea'(r) \ge C \wea(r)$ for all sufficiently large $r$.
If $D_p(\infty, f; \omega) < \infty$, then
$\lim_{r \rightarrow \infty} r^3 M_p^p(r,f) \wea(r^2) = 0$.  
\end{theorem}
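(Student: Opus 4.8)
The plan is to read this theorem as an immediate consequence of the discussion preceding it, upgraded by the comparison hypothesis $-\wea'(r) \ge C \wea(r)$. That discussion has already, via Theorem \ref{thm:logconvexlimit}, extracted a decay statement for $r^3 M_p^p(r,f)$ measured against the weight $-\wea'(r^2)$; the only remaining task is to trade the derivative weight $-\wea'(r^2)$ for the weight $\wea(r^2)$ itself.

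First I would recall the setup that feeds Theorem \ref{thm:logconvexlimit}. Taking $\lambda(x) = -1/\wea'(x^2)$ and $g(x) = x^3 M_p^p(x,f)$, the hypothesis $D_p(\infty,f;\wea) < \infty$ is exactly the statement that $\int_0^\infty g(x)\lambda(x)^{-1}\,dx < \infty$, since passing to polar coordinates gives $D_p^p(\infty,f;\wea) = \int_0^\infty r^3 M_p^p(r,f)(-\wea'(r^2))\,dr$. The function $g$ is log-convex in $\log x$ because $\log g(e^X) = 3X + p\log M_p(e^X,f)$ is a sum of two convex functions of $X$ (the second being convex by the standard log-convexity of integral means in $\log r$). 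Since the first hypothesis of the present theorem is precisely $\liminf_{x\to\infty} S(x,\lambda) > 0$ for this $\lambda$, Theorem \ref{thm:logconvexlimit} applies and yields $\lim_{x\to\infty} g(x)\lambda(x)^{-1} = 0$, that is,
\[
\lim_{r\to\infty} r^3 M_p^p(r,f)\bigl(-\wea'(r^2)\bigr) = 0.
\]

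The final step uses the comparison hypothesis. For all sufficiently large $t$ we have $-\wea'(t) \ge C\wea(t)$ with $C > 0$; applying this at $t = r^2$ gives $-\wea'(r^2) \ge C\wea(r^2)$ for all large $r$. Because $\wea > 0$ and $M_p^p(r,f) \ge 0$, this produces the sandwich
\[
0 \le C\, r^3 M_p^p(r,f)\, \wea(r^2) \le r^3 M_p^p(r,f)\bigl(-\wea'(r^2)\bigr),
\]
whose right-hand side tends to $0$ by the previous display. Dividing by $C > 0$ and letting $r \to \infty$ gives $r^3 M_p^p(r,f)\wea(r^2) \to 0$, as claimed.

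I do not expect a genuine obstacle here: the analytic heart of the argument has already been isolated in Theorem \ref{thm:logconvexlimit} and its application above, and what remains is an elementary squeeze. The one point requiring a little care is bookkeeping of the argument of $\wea'$: the comparison hypothesis is stated with argument $r$, so it must be invoked at $t = r^2$ to match the $r^2$-arguments appearing in $D_p$ and in the integral means, and one must keep the sign of $\wea'$ straight throughout (recall $\wea$ is decreasing, so $-\wea' > 0$).
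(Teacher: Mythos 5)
Your proof is correct and follows essentially the same route as the paper: apply Theorem \ref{thm:logconvexlimit} with $\lambda(x) = -1/\wea'(x^2)$ and $g(x) = x^3 M_p^p(x,f)$, identify $D_p^p(\infty,f;\wea)$ with $\int_0^\infty g(x)\lambda(x)^{-1}\,dx$ via polar coordinates, and then use $-\wea'(r^2) \ge C\wea(r^2)$ to pass from the derivative weight to $\wea(r^2)$. Your explicit squeeze at the end is in fact slightly more careful than the paper's own wrap-up, which leaves that last comparison step implicit.
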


{\bf Example}.  Suppose that $\wea(|z|^2) = (1/\alpha) e^{-\alpha |z|^2}$, 
so that we are dealing with a Fock space.  (The 
$1/\alpha$ factor is there as a convenience 
so an extra factor of $\alpha$ does not 
appear in the definition of $\lambda$.)
Note that 
\[
\begin{split}
S(x_0, \lambda) &= 
\int_{x_0}^\infty e^{\alpha x_0^2} e^{-\alpha x^2} 
    \left(\frac{x}{x_0}\right)^{2\alpha x_0^2} \, dx \\ &= 
e^{\alpha x_0^2}(\alpha x_0^2)^{-\alpha x_0^2} 
       \int_{x_0}^\infty e^{-\alpha x^2} (\alpha x^2)^{\alpha x_0^2}\, dx \\
&= 
(1/2\alpha^{1/2}) e^{\alpha x_0^2}(\alpha x_0^2)^{-\alpha x_0^2} 
\int_{\alpha x_0^2}^\infty e^{-u} u^{\alpha x_0^2-(1/2)}\, du \\
&= 
(1/2\alpha^{1/2}) e^{\alpha x_0^2}(\alpha x_0^2)^{-\alpha x_0^2} 
\Gamma(\alpha x_0^2+(1/2), \alpha x_0^2) \\ 
& \ge 
(1/2\alpha^{1/2}) e^{\alpha x_0^2}(\alpha x_0^2)^{-\alpha x_0^2} 
\Gamma(\alpha x_0^2+(1/2), \alpha x_0^2+(1/2)),
\end{split}
\]
where $\Gamma$ is the incomplete Gamma function, as defined in 
\cite{NIST:DLMF}, formula 8.2.2. 
Now for large $x$, we have (\cite{NIST:DLMF}, formula 8.11.12)
\[
\Gamma(x,x) \sim x^{x}e^{-x} x^{-1/2} \sqrt{\pi/2}.
\]
which implies, for large enough $x$, that
\[
\Gamma(x+(1/2),x+(1/2)) \ge 
C \left(x+\frac{1}{2}\right)^{x+(1/2)} e^{-x-(1/2)} 
\left(x+\frac{1}{2}\right)^{-1/2}
\ge \frac{C x^{x}e^{-x}}{e^{1/2}}
\]
for some constant $C > 0$. 
So we have that 
\[
S(x_0, \lambda) \ge 
\frac{C}{2\alpha^{1/2}e^{1/2}} 
e^{\alpha x_0^2}(\alpha x_0^2)^{-\alpha x_0^2} 
(\alpha x_0^2)^{\alpha x_0^2} e^{-\alpha x_0^2} = 
\frac{C}{2\alpha^{1/2}e^{1/2}} 
\]
for large enough $x_0$.  
Thus, we have the following result, which we state as a theorem. 

\begin{theorem}
Let $f$ be an entire function and let $0<p<\infty$.  If
\[
\int_{\mathbb{C}} |f(z)|^p |z|^2 e^{-\alpha |z|^2} \, dA(z) < \infty,
\]
then 
\[
\lim_{r \rightarrow \infty} r^3 M_p^p(r,f) e^{-\alpha r^2} = 0.
\]
\end{theorem}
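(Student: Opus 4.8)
The plan is to specialize Theorem~\ref{thm:logconvexlimit} to the weight $\lambda(x) = e^{\alpha x^2}$ (so that $\lambda(|z|)^{-1} = e^{-\alpha|z|^2}$ is exactly the density appearing in the hypothesis) and to the function $g(x) = x^3 M_p^p(x,f)$. First I would record that $g$ is a log-convex function of $\log x$: writing $X = \log x$, we have $\log g(e^X) = 3X + p\log M_p(e^X,f)$, and since $\log M_p(r,f)$ is a convex function of $\log r$ for every $0 < p < \infty$ (\cite{D_Hp}, p.~9) while $3X$ is linear, their sum is convex in $X$. Next I would convert the hypothesis into the integral condition of Theorem~\ref{thm:logconvexlimit} by passing to polar coordinates:
\[
\int_{\C} |f(z)|^p |z|^2 e^{-\alpha|z|^2}\, dA(z) = \int_0^\infty r^3 M_p^p(r,f)\, e^{-\alpha r^2}\, dr = \int_0^\infty g(x)\lambda(x)^{-1}\, dx,
\]
so the finiteness assumption is precisely $\int_0^\infty g\lambda^{-1} < \infty$.

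With these two pieces in place, the conclusion $\lim_{r\to\infty} r^3 M_p^p(r,f) e^{-\alpha r^2} = \lim_{x\to\infty} g(x)\lambda(x)^{-1} = 0$ follows from Theorem~\ref{thm:logconvexlimit} the moment we verify its hypothesis $\liminf_{x\to\infty} S(x,\lambda) > 0$, which is the real content of the argument. I would compute $\lambda'(x) = 2\alpha x\, e^{\alpha x^2}$, so that the exponent $x_0\lambda'(x_0)/\lambda(x_0) = 2\alpha x_0^2$, and hence
\[
S(x_0,\lambda) = \int_{x_0}^\infty e^{\alpha x_0^2} e^{-\alpha x^2}\Bigl(\frac{x}{x_0}\Bigr)^{2\alpha x_0^2}\, dx.
\]
Factoring out $e^{\alpha x_0^2}(\alpha x_0^2)^{-\alpha x_0^2}$ and substituting $u = \alpha x^2$ rewrites the remaining integral as $(1/2\alpha^{1/2})\,\Gamma(\alpha x_0^2 + \tfrac12,\, \alpha x_0^2)$, an incomplete Gamma function.

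The main obstacle is then the asymptotic analysis showing that the prefactors cancel against this incomplete Gamma value to leave a positive constant. I would first use monotonicity of $\Gamma(s,a)$ in its second argument to replace the lower limit $\alpha x_0^2$ by $\alpha x_0^2 + \tfrac12$, producing the diagonal value $\Gamma(\alpha x_0^2 + \tfrac12,\, \alpha x_0^2 + \tfrac12)$, and then invoke the diagonal asymptotic $\Gamma(x,x) \sim x^x e^{-x} x^{-1/2}\sqrt{\pi/2}$ (\cite{NIST:DLMF}, formula~8.11.12). The delicate bookkeeping is to check that $(\alpha x_0^2 + \tfrac12)^{\alpha x_0^2 + 1/2}(\alpha x_0^2 + \tfrac12)^{-1/2} \ge (\alpha x_0^2)^{\alpha x_0^2}$ and $e^{-\alpha x_0^2 - 1/2} = e^{-1/2} e^{-\alpha x_0^2}$, so that after multiplying by the prefactor $e^{\alpha x_0^2}(\alpha x_0^2)^{-\alpha x_0^2}$ every factor of $(\alpha x_0^2)^{\alpha x_0^2}$ and $e^{\pm \alpha x_0^2}$ cancels, leaving $S(x_0,\lambda) \ge C/(2\alpha^{1/2} e^{1/2})$ for all sufficiently large $x_0$. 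This lower bound is uniform in $x_0$, so $\liminf_{x\to\infty} S(x,\lambda) > 0$, and Theorem~\ref{thm:logconvexlimit} then delivers the claimed limit.
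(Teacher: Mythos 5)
Your proposal is correct and follows essentially the same route as the paper: the same specialization $\lambda(x)=e^{\alpha x^2}$, $g(x)=x^3M_p^p(x,f)$ in Theorem~\ref{thm:logconvexlimit}, the same reduction of $S(x_0,\lambda)$ to the incomplete Gamma function via $u=\alpha x^2$, and the same use of monotonicity in the second argument together with the diagonal asymptotic for $\Gamma(x,x)$ to get a uniform positive lower bound. The bookkeeping you describe, including the inequality $(\alpha x_0^2+\tfrac12)^{\alpha x_0^2}\ge(\alpha x_0^2)^{\alpha x_0^2}$, matches the paper's computation exactly.
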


\section{Density of Polynomials in Various Spaces}\label{sec:polydense}

In this section, we discuss the density of polynomials in various 
weighted Bergman spaces.  
Propositions \ref{prop:apbanach}, \ref{prop:apdilationdense}, 
\ref{prop:appolydensedisc} and 
\ref{prop:a2polydense} are well known, at least in certain cases, and 
we follow the standard proofs.  
The proof of Proposition \ref{prop:appolydenseplane} is similar to the proof 
that the polynomials are dense in the Fock space (see e.g.~\cite{Zhu_Fock}). 

We say a nonnegative function $\nu$ defined on $[0,R)$ 
is a radial weight function 
if the measure $\nu(|z|)\, dA$ is in $L^p(\mathbb{D}_R)$, 
and it is not the case that $\nu = 0$ a.e.

\begin{prop}\label{prop:apbanach}
Let $0 < R \le \infty$. 
Suppose that $\nu$ is a radial weight function, 
and that there is some $R'$ such that $0 \le R' < R$ and  
for each $y$ such that $R' <y <R$, the quantity 
$\inf\{\web(x): R' \le x < y\}$ is nonzero.  Then 
$A^p_R(\web(|z|))$ is a Banach space, and convergence in the norm 
of $A^p_R(\nu(|z|))$ implies uniform convergence on compact 
subsets of $\D_R$. Also, the point evaluations of $A^p_R(\nu(|z|))$
are bounded uniformly on compact subsets of $\mathbb{D}_R$. 
\end{prop}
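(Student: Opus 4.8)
The plan is to prove the three assertions---completeness, the implication from norm convergence to locally uniform convergence, and local uniform boundedness of point evaluations---by first establishing that point evaluations are locally uniformly bounded, since the other two assertions follow from this standard fact. Let $K \subset \D_R$ be compact, so that $K \subset \{|z| \le \rho\}$ for some $\rho < R$. The key estimate I would derive is a pointwise bound of the form $|f(z_0)| \le C_\rho \|f\|_{A^p_R(\nu(|z|))}$ valid for all $z_0$ with $|z_0| \le \rho$ and all $f \in A^p_R(\nu(|z|))$, with $C_\rho$ depending only on $\rho$, $p$, and the weight. To get this, I would pick an intermediate radius, say $s$ with $\rho < s < R$ and also $s > R'$, and use the sub-mean-value property of $|f|^p$: for a point $z_0$ with $|z_0| \le \rho$, the disc $\{w : |w - z_0| \le s - \rho\}$ lies inside $\{|w| < s\} \subset \D_R$, so that
\[
|f(z_0)|^p \le \frac{1}{\pi (s-\rho)^2} \int_{|w - z_0| \le s-\rho} |f(w)|^p \, dA(w).
\]

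The next step is to control the right-hand side by the weighted norm. On the annular region $R' \le |w| < s$ that contains the integration disc (enlarging slightly so that all the relevant $w$ satisfy $|w| < s$), the hypothesis gives that $m := \inf\{\web(x) : R' \le x \le s\} > 0$; here I would use the stated hypothesis with $y = s$, together with left-continuity / the fact that $\web$ restricted to the closed interval stays bounded below by the infimum over the half-open interval, to secure a strictly positive lower bound $m$ on the weight over the disc of integration. Consequently $|f(w)|^p \le m^{-1} |f(w)|^p \web(|w|)$ on that disc, and enlarging the domain of integration to all of $\D_R$ yields
\[
|f(z_0)|^p \le \frac{1}{\pi (s-\rho)^2 \, m} \int_{\D_R} |f(w)|^p \web(|w|)\, dA(w) = \frac{\|f\|^p}{\pi (s-\rho)^2\, m}.
\]
Taking $p$-th roots gives the desired bound with $C_\rho = \big(\pi (s-\rho)^2 m\big)^{-1/p}$, which is the local uniform boundedness of point evaluations and also establishes the final assertion.

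From this pointwise bound the remaining two claims follow by routine arguments. Norm convergence $f_n \to f$ then gives $\sup_{|z| \le \rho} |f_n(z) - f(z)| \le C_\rho \|f_n - f\| \to 0$, which is uniform convergence on compacta. For completeness, I would take a Cauchy sequence $(f_n)$ in $A^p_R(\nu(|z|))$; by the bound it is uniformly Cauchy on each compact subset, hence converges locally uniformly to an analytic limit $g$, while as a Cauchy sequence in the complete space $L^p(\nu\, dA)$ it also converges in norm to some $h \in L^p(\nu\, dA)$. Passing to a subsequence converging $\nu$-a.e.\ to $h$ and comparing with the locally uniform limit $g$ shows $h = g$ a.e., so the limit is analytic and lies in $A^p_R(\nu(|z|))$, proving the subspace is closed and hence a Banach space. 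The main obstacle, and the only place the precise hypothesis is used, is securing the uniform positive lower bound $m$ on the weight over a disc that reaches up to radius near $s < R$; once that lower bound is in hand, everything reduces to the standard sub-mean-value estimate and the completeness of $L^p$.
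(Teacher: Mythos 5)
There is a genuine gap in the key estimate. Your pointwise bound rests on the claim that the weight is bounded below by $m=\inf\{\nu(x):R'\le x\le s\}>0$ on the whole disc of integration $\{w:|w-z_0|\le s-\rho\}$. But the hypothesis only provides a positive lower bound for $\nu$ on $[R',y)$ with $R'<y<R$; it says nothing about $[0,R')$, where $\nu$ is allowed to vanish identically (the case $R'>0$ is precisely why $R'$ appears in the statement). Your integration disc is centered at an arbitrary $z_0$ with $|z_0|\le\rho$ and has the fixed radius $s-\rho$, so whenever $|z_0|<R'+(s-\rho)$ it contains points $w$ with $|w|<R'$, and the inequality $|f(w)|^p\le m^{-1}|f(w)|^p\nu(|w|)$ fails there. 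So as written the bound $|f(z_0)|\le C_\rho\|f\|$ is not justified for $z_0$ near the origin (or, if you instead restrict to discs contained in the annulus, you lose exactly those $z_0$). Everything downstream---locally uniform convergence and completeness---is fine once the pointwise bound is secured, so this is the one step that needs repair.

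The paper's proof closes this hole in two moves that your argument is missing. First, the sub-mean-value estimate is applied only for $|z|>(R'+R)/2$, with a radius $r'=(R-|z|)/2$ that shrinks as $|z|$ grows; one checks that then $|z|-r'>R'$ and $|z|+r'<R$, so the disc of integration stays inside the annulus $R'<|w|<R$ where the hypothesis gives $m_z=\inf\{\nu(|w|):|z|-r'<|w|<|z|+r'\}>0$. Second, for $|z|\le(R'+R)/2$ the point-evaluation bound is obtained from the maximum principle, using the bound already established on the circle $|z|=(R'+R)/2$. If you add these two ingredients (restrict the centers so the disc avoids $\{|w|<R'\}$, then invoke the maximum principle for the inner region, and note the resulting constant is monotone in $|z|$ so the bound is uniform on compacta), your proof becomes essentially the paper's; the $R=\infty$ case is handled the same way with $|z|>R'+1$ and $r'=1$.
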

\begin{proof}
We first consider the case $R < \infty$. 
Since the space in question is a subspace of $L^p(\web(|z|))$, 
to show that it is a Banach space we 
need only show that it is closed.
To show this, we will show that convergence in the norm implies 
uniform convergence on compact subsets.  
Let $f \in A^p_R(\web(|z|))$.  Since $f$ is analytic, 
the function $|f(z)|^p$ is subharmonic, and so 
\[
|f(z)|^p \le \frac{1}{2\pi} \int_0^{2\pi} |f(z + re^{i\theta})|^p \, 
d \theta
\]
for any $z \in \D_R$ with $|z|+r < R$.   
Suppose first that $|z| > (R'+R)/2.$ 
Now let $r' = (R-|z|)/2$
and define
$m_z = \inf \{\web(|w|): |z|-r' < |w| < |z|+r'\}$. 
By assumption, $m_z > 0$. 
We multiply the last 
displayed inequality by $2\pi m_zr$ and integrate $r$ from $0$ to $r'$ to 
conclude that 
\[
\begin{split}
m_z |f(z)|^p \pi r'^2 &\le \int_{|z-w|<r'} |f(w)|^p m_z \, dA(w) \\ &\le 
\int_{|z-w|<r'} |f(w)|^p \web(|w|) \, dA(w) \\ &\le 
\int_{\D_R} |f(w)|^p \web(|w|) \, dA(w).
\end{split}
\]
This shows that point evaluation is a bounded linear functional for 
$ R > |z| > (R' + R)/2$, and the bound depends only on $|z|$. 
By the maximum principle, point evaluation is also bounded for 
$|z| \le (R + R')/2$.  
Since $m_z r'^2$ is a decreasing positive function of $|z|$, 
the bound is uniform on closed discs of radius less than $R$ centered at 
the origin. 
Thus, convergence in the norm implies convergence on 
compact subsets.  This implies that a convergent sequence of 
analytic functions converges to an analytic function, 
which shows the space is complete. 

For the case $R = \infty$, we repeat the above proof, except that 
we first consider $z$ such that $|z| > R' + 1$, and we let $r' = 1$. 
\end{proof}

We define the dilation of the analytic function $f$ by 
$f_\rho(z) = f(\rho z)$ for $0<\rho<\infty$. 

\begin{prop}\label{prop:apdilationdense}
Let $0 < R \le \infty$ and 
let $\nu$ be a radial weight function.
If $f$ is an analytic function in $A_R^p(\nu(|z|))$, then 
$f_\rho \rightarrow f$ in $A_R^p(\nu(|z|))$ as 
$\rho \rightarrow 1^{-}$. 
\end{prop}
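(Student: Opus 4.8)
The plan is to use the standard compact-exhaustion-plus-tail argument, with the monotonicity of the integral means $M_p(r,f)$ as the essential input. Since we send $\rho \to 1^-$, each dilate $f_\rho(z) = f(\rho z)$ is analytic on $\D_R$ (indeed on $\D_{R/\rho}$), and $f_\rho \to f$ uniformly on compact subsets of $\D_R$ because $f$ is continuous. The difficulty is not the pointwise convergence but producing uniform-in-$\rho$ control of the mass of $|f_\rho|^p \nu$ near the boundary circle $|z| = R$.

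First I would fix $\varepsilon > 0$ and, using $f \in A^p_R(\nu(|z|))$, choose $r_0 < R$ so that the tail $\int_{r_0 \le |z| < R} |f(z)|^p \nu(|z|)\, dA(z) < \varepsilon$. The key step is to bound the corresponding tail for the dilates uniformly in $\rho \in (0,1)$. Passing to polar coordinates and using the paper's normalization of $M_p$, one has
\[
\int_{r_0 \le |z| < R} |f(\rho z)|^p \nu(|z|)\, dA(z) = \int_{r_0}^R M_p^p(\rho s, f)\, \nu(s)\, s\, ds.
\]
Because $\rho < 1$ gives $\rho s \le s$, and $M_p(\cdot, f)$ is non-decreasing (see \cite{D_Hp}, p.~9), we have $M_p^p(\rho s, f) \le M_p^p(s, f)$, so this tail is at most $\int_{r_0 \le |z| < R}|f|^p\nu\,dA < \varepsilon$, independently of $\rho$. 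Combining with the elementary inequality $|f_\rho - f|^p \le 2^{p-1}(|f_\rho|^p + |f|^p)$ shows that the tail of $\int |f_\rho - f|^p\nu\,dA$ over $\{r_0 \le |z| < R\}$ is at most $2^p\varepsilon$ for every $\rho \in (0,1)$.

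It remains to handle the compact part $\{|z| \le r_0\}$. There $f_\rho \to f$ uniformly, since $f$ is uniformly continuous on a slightly larger closed disc contained in $\D_R$, and $\nu(|z|)\,dA$ has finite mass on $\{|z| \le r_0\}$ because $\nu$ is a radial weight; hence
\[
\int_{|z| \le r_0}|f_\rho - f|^p \nu\, dA \le \Big(\sup_{|z|\le r_0}|f_\rho(z) - f(z)|\Big)^p \int_{|z|\le r_0}\nu\,dA \longrightarrow 0
\]
as $\rho \to 1^-$. Choosing $\rho$ close enough to $1$ makes this term smaller than $\varepsilon$, and since $\varepsilon$ was arbitrary we conclude $\|f_\rho - f\|_{A^p_R(\nu(|z|))} \to 0$.

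The main obstacle is the uniform tail estimate, and the monotonicity of $M_p(r,f)$ in $r$ is exactly what resolves it: without it one would have no uniform-in-$\rho$ domination near the boundary, and dominated convergence could not be applied directly.
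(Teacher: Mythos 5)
Your proof is correct and follows essentially the same route as the paper: the key input in both is the monotonicity of $M_p(r,f)$ in $r$, which gives $M_p(\rho s,f)\le M_p(s,f)$ and hence uniform-in-$\rho$ control of the boundary mass. The only difference is cosmetic — the paper packages your explicit tail-plus-compact $\varepsilon$-splitting into a single application of the dominated convergence theorem with dominating function $2^p\,r\,M_p^p(r,f)\,\nu(r)$.
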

\begin{proof}
Since the integral means of an analytic function are increasing, 
we have for $\rho < 1$ that $f_\rho \in A^p_R(\nu(|z|))$ and that
\[
M_p(r,f-f_\rho) \le M_p(r,f) + M_p(r,f_\rho) \le 2 M_p(r, f).
\]
The hypothesis that $f \in A_R^p(\nu(|z|))$ 
implies that $r M_p(r,f) \in L^p(\nu\, dr)$.  Thus 
\[
\int_0^R M_p^p(r,f-f_\rho) r \, \nu(r) \, dr \rightarrow 0
\]
as $\rho \rightarrow 1^-$, by the Lebesgue dominated convergence 
theorem. 
\end{proof}

\begin{prop}\label{prop:appolydensedisc}
Let $R < \infty$ and let $0 < p < \infty$. 
Assume that $\nu$ is a radial weight function.
Then the polynomials are dense in $A^p_R(\nu(|z|))$. 
\end{prop}
\begin{proof}
Let $f \in A_R^p(\nu(|z|))$ and let $\rho < 1$.
Since $f_\rho$ is analytic on $\D_{R/\rho}$, the Taylor series of 
$f_\rho$ converges to $f_\rho$ uniformly in $\D_{R}$, and thus 
each dilation is in the closure of the polynomials, since 
the integrability of $\nu(|z|)$ guarantees that uniform convergence on 
$\D_R$ implies convergence in $A_R^p(\nu(|z|))$.   
By Proposition \ref{prop:apdilationdense}, $f$ is in the closure of the 
set of its dilations.  Thus, $f$ is in the closure of the polynomials in 
$A_R^p(\nu(|z|))$. 
\end{proof}

The situation is more difficult for the case $R=\infty$.  We 
first prove the following proposition.

\begin{prop}\label{prop:a2polydense}
Let $\nu$ be a radial weight function on $[0,\infty)$
such that every polynomial is in $A_\infty^2(\nu(|z|))$.  
Then the polynomials are dense in $A_\infty^2(\nu(|z|))$. 
\end{prop}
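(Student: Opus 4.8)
The plan is to use the Hilbert space structure of the case $p=2$ together with the radial symmetry of the weight, which together force the monomials $z^n$ to form an orthogonal system, and then to show directly that the Taylor partial sums of any $f\in A^2_\infty(\nu(|z|))$ converge to $f$ in norm. Writing the inner product as $\langle f,g\rangle = \int_\C f(z)\conj{g(z)}\,\nu(|z|)\,dA(z)$ and passing to polar coordinates, the angular factor $\int_0^{2\pi} e^{i(n-m)\theta}\,d\theta$ vanishes unless $n=m$, so $\langle z^n,z^m\rangle = 0$ for $n\ne m$. The hypothesis that every polynomial lies in $A^2_\infty(\nu(|z|))$ gives $\|z^n\|^2 = 2\pi\int_0^\infty r^{2n+1}\nu(r)\,dr < \infty$, and the assumption that $\nu$ is a radial weight function (not a.e.\ zero) gives $\|z^n\|^2 > 0$.

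Given $f\in A^2_\infty(\nu(|z|))$ with Taylor expansion $f=\sum_{k\ge 0} a_k z^k$, I would take as approximating polynomials the partial sums $S_N(z)=\sum_{k=0}^N a_k z^k$. The central step is a Parseval identity carried out one circle at a time: since $f$ is entire, its Taylor series converges uniformly on each circle $|z|=r$, so Parseval's theorem for Fourier series yields $\int_0^{2\pi} |f(re^{i\theta})|^2\,d\theta = 2\pi\sum_{k\ge 0}|a_k|^2 r^{2k}$. Integrating against $\nu(r)\,r\,dr$ and interchanging sum and integral, which is legitimate by Tonelli's theorem because the integrand is nonnegative, gives the global identity $\|f\|^2 = \sum_{k\ge 0}|a_k|^2\|z^k\|^2$; in particular this series converges.

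Applying the same computation to the entire function $f-S_N=\sum_{k>N}a_k z^k$ yields $\|f-S_N\|^2 = \sum_{k>N}|a_k|^2\|z^k\|^2$, which is the tail of the convergent series above and hence tends to $0$ as $N\to\infty$. Thus $S_N\to f$ in the norm of $A^2_\infty(\nu(|z|))$, and the polynomials are dense. Note that this argument is constructive and does not require invoking completeness of the ambient space or an abstract orthogonal-complement argument.

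The argument is the standard one for the Fock space, and the only place needing care is the interchange of summation and integration; because every term is nonnegative, Tonelli's theorem applies with no further integrability hypothesis beyond finiteness of $\|f\|$, so this presents no genuine obstacle. What deserves comment is why the plane is harder than the disc: in Proposition \ref{prop:appolydensedisc} one approximates a dilation $f_\rho$ by the partial sums of its Taylor series, using that $f_\rho$ is analytic on a strictly larger disc and hence has a Taylor series converging uniformly on $\close{\D_R}$. On the plane no such uniform convergence on all of $\C$ is available, so the uniform-approximation step must be replaced by the $L^2$ Parseval computation, which is precisely what restricts this proposition to $p=2$.
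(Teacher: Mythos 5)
Your proof is correct, but it takes a genuinely different route from the paper's. The paper argues by duality: it supposes $f \in A^2_\infty(\nu(|z|))$ satisfies $\langle f, z^m\rangle = 0$ for all $m$, expands $f$ in its Taylor series, interchanges sum and integral over $\D_r$ (using uniform convergence on $\D_r$ and integrability of $\nu$), lets $r \to \infty$, and reads off $a_m \int_{\C} |z|^{2m}\nu(|z|)\,dA = 0$, hence $a_m = 0$ for every $m$ and $f \equiv 0$; density then follows from the orthogonal-complement characterization of closed subspaces of a Hilbert space. Your argument is the primal version: the same two ingredients ($p=2$ and radiality of $\nu$, which make the monomials orthogonal) are combined with circle-by-circle Parseval and Tonelli to get the exact identity $\|f\|^2 = \sum_k |a_k|^2\|z^k\|^2$, and applying this to $f - S_N$ exhibits the Taylor partial sums as explicit approximants with an exact error formula given by the tail of a convergent series. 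Your version buys somewhat more: it is constructive, it shows that the normalized monomials form an orthonormal basis, and --- as you note --- it does not invoke completeness of $A^2_\infty(\nu(|z|))$. That last point is not cosmetic: the paper's step from ``only $0$ is orthogonal to all monomials'' to ``polynomials are dense'' tacitly uses that $A^2_\infty(\nu(|z|))$ is closed in $L^2(\nu\,dA)$ (otherwise the orthogonal projection of $f$ onto the closure of the polynomials could have a non-analytic complementary part, to which the computation does not apply), and closedness is only guaranteed under the additional hypotheses of Proposition \ref{prop:apbanach}, which this proposition does not assume. Your concluding remark about why the disc-case proof (uniform Taylor approximation of dilations) does not carry over to the plane is also accurate.
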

\begin{proof}
Suppose $f$ is a function in 
$A_\infty^2(\nu(|z|))$ such that $\langle f, z^n \rangle = 0$ for every 
$n \ge 0$, and let $\sum_{n=0}^\infty a_n z^n$ be the Taylor 
series of $f$. We 
have by the dominated convergence theorem and 
H\"{o}lder's inequality that 
\[
\begin{split}
0 = \langle f, z^m \rangle &= 
\int_{\C} \left(\sum_{n=0}^\infty a_n z^n\right) \conj{z^m} \nu(|z|) \, dA(z) 
\\ &= 
\lim_{r \rightarrow \infty}
\int_{\D_r} \left(\sum_{n=0}^\infty a_n z^n\right) 
        \conj{z^m} \nu(|z|) \, dA(z), 
\end{split}
\]
for $m \ge 0$. 
By the uniform convergence of the Taylor series on $\mathbb{D}_r$ 
and the integrability of $\nu(|z|)$, the 
above expressions equal
\[ 
\begin{split}
&\quad
\lim_{r \rightarrow \infty}
\sum_{n=0}^\infty
\int_{\D_r}  a_n z^n
        \conj{z^m} \nu(|z|) \, dA(z) 
\\ &=
\lim_{r \rightarrow \infty} 
\int_{\D_r} a_m |z|^{2m} \nu(|z|) \, dA(z)
\\ &=
a_m \int_{\C} |z|^{2m} \nu(|z|) \, dA(z).
\end{split}
\]
But the above integral is positive, so $a_m = 0$ 
for each $m \ge 0$, and thus $f$ is identically $0$.  This shows that the 
polynomials are dense in $A_\infty^2(\nu(|z|))$. 
\end{proof}

For a radial weight function $\nu$ with the property that 
$A_\infty^p$ has bounded point evaluations (and thus is 
a Banach space), 
define $m_p(z;\nu) = \sup_{\|f\|_{A^p_\infty(\nu)} = 1} |f(z)|$. 
\begin{prop}\label{prop:appolydenseplane}
Let $\nu$ be a function on $[0,\infty)$
such that 
$A^p_\infty(\nu(|z|))$ contains every polynomial 
and has point evaluations uniformly bounded on compact subsets of 
$\mathbb{C}$. 
Suppose that for any $\rho$ such that  
$0<\rho<1$, 
there is some function $\mu$ 
such that $A^2_\infty (\mu(|z|))$ has point evaluations uniformly 
bounded on compact subsets of $\mathbb{C}$
and such that
\begin{equation}\label{eq:appolydense_frina2}
\int_{\C} m_p(\rho z;\nu)^{2} \mu(|z|) \, dA(z) =C_1^2 < \infty
\end{equation}
and
\begin{equation}\label{eq:appolydense_a2inap}
\int_{\C} m_2(z;\mu)^{p} \nu(|z|) \, dA(z) =C_2^p < \infty.
\end{equation}
Then the polynomials are dense in $A_\infty^p(\nu(|z|))$.
\end{prop}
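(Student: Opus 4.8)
The plan is to read the two integral hypotheses as the statements that the $\rho$-dilation operator is bounded from $A^p_\infty(\nu(|z|))$ into the auxiliary Hilbert space $A^2_\infty(\mu(|z|))$, and that the inclusion map runs boundedly back the other way, and then to import the density of polynomials in $A^2_\infty(\mu(|z|))$ from Proposition \ref{prop:a2polydense}. By Proposition \ref{prop:apdilationdense} (applicable since $A^p_\infty(\nu(|z|))$ contains the constants, so $\nu$ is a radial weight function), every $f \in A^p_\infty(\nu(|z|))$ is the limit of its dilations $f_\rho$ as $\rho \to 1^-$. As the closure of the polynomials is closed, it therefore suffices to show that each $f_\rho$ lies in that closure. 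I fix $\rho \in (0,1)$ and let $\mu$ be the weight furnished by the hypothesis.

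First I would unwind the extremal quantities. For any $g$ with $\|g\|_{A^p_\infty(\nu)} = 1$ we have $|g(w)| \le m_p(w;\nu)$, and likewise $|h(w)| \le m_2(w;\mu)$ when $\|h\|_{A^2_\infty(\mu)} = 1$, both extending homogeneously. Applying the first bound to $g = f/\|f\|_{A^p_\infty(\nu)}$ at the point $\rho z$ gives $|f_\rho(z)| \le m_p(\rho z;\nu)\,\|f\|_{A^p_\infty(\nu)}$, so \eqref{eq:appolydense_frina2} yields $\|f_\rho\|_{A^2_\infty(\mu)} \le C_1 \|f\|_{A^p_\infty(\nu)}$; in particular $f_\rho \in A^2_\infty(\mu)$. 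Dually, the second bound together with \eqref{eq:appolydense_a2inap} gives $\|h\|_{A^p_\infty(\nu)} \le C_2 \|h\|_{A^2_\infty(\mu)}$ for every $h \in A^2_\infty(\mu)$, so the identity inclusion $A^2_\infty(\mu) \hookrightarrow A^p_\infty(\nu)$ is bounded.

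Next I would check that Proposition \ref{prop:a2polydense} genuinely applies to $A^2_\infty(\mu)$, i.e.\ that every polynomial lies there. Testing \eqref{eq:appolydense_frina2} against the monomial $w^n \in A^p_\infty(\nu)$ gives $m_p(\rho z;\nu) \ge \rho^n |z|^n / \|w^n\|_{A^p_\infty(\nu)}$, whence $\int_{\C} |z|^{2n}\mu(|z|)\,dA \le \rho^{-2n}\|w^n\|_{A^p_\infty(\nu)}^2\, C_1^2 < \infty$, so every monomial, and hence every polynomial, lies in $A^2_\infty(\mu)$. Since $A^2_\infty(\mu)$ has bounded point evaluations, Proposition \ref{prop:a2polydense} produces a sequence of polynomials $p_m \to f_\rho$ in $A^2_\infty(\mu)$.

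Finally I would transport this convergence across the bounded inclusion: $\|p_m - f_\rho\|_{A^p_\infty(\nu)} \le C_2 \|p_m - f_\rho\|_{A^2_\infty(\mu)} \to 0$, so $f_\rho$ lies in the closure of the polynomials in $A^p_\infty(\nu)$, and letting $\rho \to 1^-$ completes the proof. The only point needing care — the main obstacle, such as it is — is the bookkeeping of the previous paragraph: Proposition \ref{prop:a2polydense} requires polynomials to belong to $A^2_\infty(\mu)$, and one must deduce this from the hypotheses rather than assume it; the estimate of $m_p(\rho z;\nu)$ against the monomials supplies exactly what is needed. Everything else is a clean composition of two bounded maps with the $A^2$ density result.
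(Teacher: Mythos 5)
Your argument is correct and follows essentially the same route as the paper's proof: bound $f_\rho$ into $A^2_\infty(\mu(|z|))$ via \eqref{eq:appolydense_frina2}, apply Proposition \ref{prop:a2polydense} there, and transport the approximating polynomials back through the bounded inclusion given by \eqref{eq:appolydense_a2inap}, finishing with Proposition \ref{prop:apdilationdense}. Your verification that the monomials lie in $A^2_\infty(\mu(|z|))$ (estimating $m_p(\rho z;\nu)$ from below by a normalized monomial) is just a rephrasing of the paper's observation that $z^n = (( \rho^{-n} z^n))_\rho$ is the $\rho$-dilation of a polynomial in $A^p_\infty(\nu(|z|))$, so there is no substantive difference.
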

\begin{proof}
Let $0 < \rho < 1$.
Note that
$M_p(r,f_\rho) = M_p(r\rho, f) \le M_p(r, f)$, so that 
$\|f_\rho\|_{A^p_\infty(\nu(|z|))} \le \|f\|_{A^p_\infty(\nu(|z|))}$.
Thus $|f(\rho z)| \le m_p(\rho z, \nu) \|f\|_{A_\infty^p(\nu(|z|))}$. 
Therefore $\|f_\rho\|_{A^2_\infty(\mu(|z|))}$ is at most 
$ C_1 \|f\|_{A^p_\infty(\nu(|z|))}$,  
and $f_\rho \in A^2_\infty(\mu(|z|))$.
Note that this implies that every polynomial is in 
$A^2_\infty(\mu(|z|))$, since if $p$ is a polynomial, then 
$p_{1/\rho}$ is also a polynomial and is in $A_\infty^p(\nu(|z|))$, which 
implies that $p$ is in $A^2_\infty(\mu(|z|))$. 

Now, by Proposition \ref{prop:a2polydense}, there is a sequence of polynomials 
in $A^2_\infty(\mu(|z|))$ that approach $f_\rho$.  But for any 
function $g \in A^2_\infty(\mu(|z|))$, we have 
$ \|g\|_{A^p_\infty(\nu(|z|))} \le C_2 \|g\|_{A^2_\infty(\mu(|z|))}$.   
Therefore, there is a sequence of polynomials 
approaching $f_\rho$ in $A^p_\infty(\nu(|z|))$. Since the functions 
$f_\rho$ approach $f$ in $A^p_\infty(\nu(|z|))$, there is a sequence of 
polynomials approaching $f$ in $A^p_\infty(\nu(|z|))$. 
\end{proof}
Note that the quantities $m_p(z, \nu)$ and $m_2(z, \mu)$ can often be estimated 
by the method used in the proof of Proposition \ref{prop:apbanach}.

The following corollary follows from Proposition \ref{prop:appolydenseplane}.
\begin{cor}
Suppose that $\nu$ is a nonzero decreasing function on 
$[0,\infty)$ such that 
$\nu(|z|) \in L^1(\mathbb{C})$ and 
such that every polynomial is in $A_\infty^p(\nu(|z|))$.
Also assume that 
for each $\rho$ such that $0<\rho<1$, there is a $\beta$ such 
that $0<\beta<1$ and such that 
\[
\int_{\mathbb{C}} \nu(\rho|z|+1)^{-2/p} \nu(|z|)^{2\beta/p} \, dA(z) < \infty
\]
and
\[
\int_{\mathbb{C}} \nu(|z|+1)^{-\beta} \nu(|z|) \, dA(z) < \infty.
\]
Then the polynomials are dense in $A_\infty^p(\nu(|z|))$. 
\end{cor}
\begin{proof}
Let $f$ be an entire function.  By subharmonicity, 
\[
|f(z)|^p \le \frac{1}{2\pi} \int_0^{2\pi} |f(z+r e^{i\theta})|^p \, d\theta
\]
for any $r > 0$. 
If we 
multiply the previous displayed inequality by $2 \pi r \nu(|z|+1)$ 
and integrate from $r=0$ to $r=1$, we find that 
\[
\begin{split}
\pi \nu(|z|+1) |f(z)|^p 
&\le \int_{|z-w|<1} |f(w)|^p \nu(|z|+1) \, dA(w) \\ 
&\le
\int_{|z-w|<1} |f(w)|^p \nu(|w|) \, dA(w) \\ &
\le 
\int_{\C} |f(w)|^p \nu(|w|) \, dA(w).
\end{split}
\]
Thus, we have that 
\[
|f(z)| \le \pi^{1/p} \nu(|z|+1)^{-1/p} \|f\|_{A^p_\infty(\nu(|z|))}.
\]
And similarly, 
\[
|f(z)| \le \pi^{1/2} \nu(|z|+1)^{-\beta/p} \|f\|_{A^2_\infty(\nu(|z|)^{2\beta/p})}.
\]
for any function $f \in A^2_\infty(\nu(|z|)^{2\beta/p})$.
So if there is some $\beta$ such that $0<\beta<1$ 
and such that
\[
\int_{\C} \nu(\rho|z|+1)^{-2/p} \nu(|z|)^{2\beta/p} \, dA(z) < \infty
\]
and
\[
\int_{\C} \nu(|z|+1)^{-\beta} \nu(|z|) \, dA(z) < \infty,
\]
then the result will hold by Proposition \ref{prop:appolydenseplane}.
\end{proof}

Note that if $\nu$ is a bounded function that is eventually decreasing, 
then $A^p_\infty(\nu(|z|))$ is equivalent in norm to 
$A^p_\infty(\widetilde{\nu}(|z|))$, where $\widetilde{\nu}$ is 
decreasing and $\widetilde{\nu}(x) = \nu(x)$ for $x$ sufficiently 
large.  Thus, the previous corollary can be applied in modified 
form to such functions $\nu$.

The next corollary is needed to apply the results of 
Section \ref{sec:regplane} to the Fock space. It follows from the 
above Corollary by choosing $\beta$ such that $\rho < \beta < 1$. 

\begin{cor}
Let $\alpha > 0$ and $0<p<\infty$. 
The space $A^p_\infty(|z|^2 e^{-\alpha |z|^2} + e^{-\alpha |z|^2})$ is a Banach 
space in which the polynomials are dense.
\end{cor}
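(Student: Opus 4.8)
The plan is to deduce both assertions from results already established, after recording two small preliminary points. Write $\nu(r) = (r^2+1)e^{-\alpha r^2}$, so that the weight equals $\nu(|z|)$. First I note that $\nu$ need not be decreasing on all of $[0,\infty)$: since $\nu'(r) = 2r e^{-\alpha r^2}\bigl(1-\alpha(r^2+1)\bigr)$, the profile increases on an initial interval when $\alpha < 1$. Nevertheless this same formula shows $\nu'(r) < 0$ once $r^2 > (1-\alpha)/\alpha$, so $\nu$ is eventually decreasing; being also continuous, positive, and bounded, it may, by the remark following the preceding Corollary, be replaced by an equivalent decreasing weight $\widetilde{\nu}$ agreeing with $\nu$ for all large $r$, and that Corollary applied in its modified form. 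The Banach-space assertion is separate and does not require monotonicity: it follows from Proposition \ref{prop:apbanach} with $R' = 0$, because $\nu$ is continuous and strictly positive, so $\inf\{\nu(x): 0 \le x < y\}$ is the positive minimum of $\nu$ on the compact interval $[0,y]$ for every finite $y$.

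Next I would verify the standing hypotheses of the preceding Corollary for $\widetilde{\nu}$. That $\widetilde{\nu}(|z|) \in L^1(\mathbb{C})$ and that every polynomial lies in $A^p_\infty(\widetilde{\nu}(|z|))$ both reduce, since $\widetilde{\nu}$ agrees with $\nu$ for large $r$ and is bounded near $0$, to the finiteness of the Gaussian moments $\int_0^\infty r^{m}(r^2+1)e^{-\alpha r^2}\,dr < \infty$, valid for every $m \ge 0$; the case $m=1$ gives integrability and the values $m = pn+1$ handle the monomial $z^n$ for each $n$.

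The heart of the argument is the verification, for each fixed $\rho \in (0,1)$, of the two integral conditions with the choice $\rho < \beta < 1$. Because convergence of each integral is decided at infinity, where $\widetilde{\nu} = \nu$, and because the polynomial prefactors, which are powers of $(r^2+1)$, never affect the convergence of a Gaussian-type integral, I would track only the exponential factors. In the first condition, $\nu(\rho r + 1)^{-2/p}$ contributes $e^{(2\alpha/p)(\rho r+1)^2}$ and $\nu(r)^{2\beta/p}$ contributes $e^{-(2\alpha\beta/p)r^2}$, so in polar coordinates the integrand is comparable to a power of $r$ times $e^{(2\alpha/p)(\rho^2-\beta)r^2}$; this is integrable because $\rho < \beta$ forces $\rho^2 - \beta < 0$. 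In the second condition, $\nu(r+1)^{-\beta}$ contributes $e^{\alpha\beta(r+1)^2}$ and $\nu(r)$ contributes $e^{-\alpha r^2}$, giving an integrand comparable to a power of $r$ times $e^{\alpha(\beta-1)r^2}$, which is integrable precisely because $\beta < 1$. Hence both conditions hold for the stated range of $\beta$, and the preceding Corollary yields that the polynomials are dense in $A^p_\infty(\nu(|z|))$.

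The only real obstacle is this exponent bookkeeping; everything else is routine confirmation that Gaussian moments are finite and that the failure of global monotonicity is absorbed by the remark. I would only be careful to observe that the first condition in fact needs only $\beta > \rho^2$ and the second needs only $\beta < 1$, so that the window $\rho < \beta < 1$ is nonempty and comfortably valid, which is exactly the choice indicated before the statement.
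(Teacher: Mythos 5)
Your proposal is correct and follows exactly the route the paper intends: the paper's entire proof is the one-line remark that the corollary ``follows from the above Corollary by choosing $\beta$ such that $\rho < \beta < 1$,'' with the preceding remark about eventually decreasing weights silently absorbing the failure of global monotonicity for small $\alpha$. You have simply supplied the exponent bookkeeping and the Banach-space verification that the paper leaves implicit, and all of it checks out.
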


\providecommand{\bysame}{\leavevmode\hbox to3em{\hrulefill}\thinspace}
\providecommand{\MR}{\relax\ifhmode\unskip\space\fi MR }
\providecommand{\MRhref}[2]{%
  \href{http://www.ams.org/mathscinet-getitem?mr=#1}{#2}
}
\providecommand{\href}[2]{#2}

\end{document}